\documentclass[12pt]{amsart}
\usepackage{amsmath,amssymb,amsbsy,amsfonts,latexsym,amsopn,amstext,
                                               amsxtra,euscript,amscd}



\newfont{\teneufm}{eufm10}
\newfont{\seveneufm}{eufm7}
\newfont{\fiveeufm}{eufm5}
%
%
\newfam\eufmfam
                \textfont\eufmfam=\teneufm \scriptfont\eufmfam=\seveneufm
                \scriptscriptfont\eufmfam=\fiveeufm
%
%
%




\def\bbbc{{\mathchoice {\setbox0=\hbox{$\displaystyle\rm C$}\hbox{\hbox
to0pt{\kern0.4\wd0\vrule height0.9\ht0\hss}\box0}}
{\setbox0=\hbox{$\textstyle\rm C$}\hbox{\hbox
to0pt{\kern0.4\wd0\vrule height0.9\ht0\hss}\box0}}
{\setbox0=\hbox{$\scriptstyle\rm C$}\hbox{\hbox
to0pt{\kern0.4\wd0\vrule height0.9\ht0\hss}\box0}}
{\setbox0=\hbox{$\scriptscriptstyle\rm C$}\hbox{\hbox
to0pt{\kern0.4\wd0\vrule height0.9\ht0\hss}\box0}}}}
\def\bbbq{{\mathchoice {\setbox0=\hbox{$\displaystyle\rm
Q$}\hbox{\raise 0.15\ht0\hbox to0pt{\kern0.4\wd0\vrule
height0.8\ht0\hss}\box0}} {\setbox0=\hbox{$\textstyle\rm
Q$}\hbox{\raise 0.15\ht0\hbox to0pt{\kern0.4\wd0\vrule
height0.8\ht0\hss}\box0}} {\setbox0=\hbox{$\scriptstyle\rm
Q$}\hbox{\raise 0.15\ht0\hbox to0pt{\kern0.4\wd0\vrule
height0.7\ht0\hss}\box0}} {\setbox0=\hbox{$\scriptscriptstyle\rm
Q$}\hbox{\raise 0.15\ht0\hbox to0pt{\kern0.4\wd0\vrule
height0.7\ht0\hss}\box0}}}}
\def\bbbt{{\mathchoice {\setbox0=\hbox{$\displaystyle\rm
T$}\hbox{\hbox to0pt{\kern0.3\wd0\vrule height0.9\ht0\hss}\box0}}
{\setbox0=\hbox{$\textstyle\rm T$}\hbox{\hbox
to0pt{\kern0.3\wd0\vrule height0.9\ht0\hss}\box0}}
{\setbox0=\hbox{$\scriptstyle\rm T$}\hbox{\hbox
to0pt{\kern0.3\wd0\vrule height0.9\ht0\hss}\box0}}
{\setbox0=\hbox{$\scriptscriptstyle\rm T$}\hbox{\hbox
to0pt{\kern0.3\wd0\vrule height0.9\ht0\hss}\box0}}}}
\def\bbbs{{\mathchoice
{\setbox0=\hbox{$\displaystyle     \rm S$}\hbox{\raise0.5\ht0\hbox
to0pt{\kern0.35\wd0\vrule height0.45\ht0\hss}\hbox
to0pt{\kern0.55\wd0\vrule height0.5\ht0\hss}\box0}}
{\setbox0=\hbox{$\textstyle        \rm S$}\hbox{\raise0.5\ht0\hbox
to0pt{\kern0.35\wd0\vrule height0.45\ht0\hss}\hbox
to0pt{\kern0.55\wd0\vrule height0.5\ht0\hss}\box0}}
{\setbox0=\hbox{$\scriptstyle      \rm S$}\hbox{\raise0.5\ht0\hbox
to0pt{\kern0.35\wd0\vrule height0.45\ht0\hss}\raise0.05\ht0\hbox
to0pt{\kern0.5\wd0\vrule height0.45\ht0\hss}\box0}}
{\setbox0=\hbox{$\scriptscriptstyle\rm S$}\hbox{\raise0.5\ht0\hbox
to0pt{\kern0.4\wd0\vrule height0.45\ht0\hss}\raise0.05\ht0\hbox
to0pt{\kern0.55\wd0\vrule height0.45\ht0\hss}\box0}}}}
\def\bbbz{{\mathchoice {\hbox{$\sf\textstyle Z\kern-0.4em Z$}}
{\hbox{$\sf\textstyle Z\kern-0.4em Z$}} {\hbox{$\sf\scriptstyle
Z\kern-0.3em Z$}} {\hbox{$\sf\scriptscriptstyle Z\kern-0.2em Z$}}}}

\newtheorem{theorem}{Theorem}
\newtheorem{lemma}[theorem]{Lemma}

\newtheorem{cor}[theorem]{Corollary}

\newtheorem{rem}[theorem]{Remark}

\def\squareforqed{\hbox{\rlap{$\sqcap$}$\sqcup$}}
\def\qed{\ifmmode\squareforqed\else{\unskip\nobreak\hfil
\penalty50\hskip1em\null\nobreak\hfil\squareforqed
\parfillskip=0pt\finalhyphendemerits=0\endgraf}\fi}

\def\cA{{\mathcal A}}
\def\cB{{\mathcal B}}

\def\cE{{\mathcal E}}

\def\cP{{\mathcal P}}

\def\cU{{\mathcal U}}

\def\cX{{\mathcal X}}
\def\cY{{\mathcal Y}}

\def \sf {\mathfrak s}

\def\ZK{\Z_\K}

\def\Res{{\mathrm{Res}}}


\newcommand{\ignore}[1]{}

\def\vec#1{\mathbf{#1}}



\hyphenation{re-pub-lished}

\def \C{\mathbb{C}}
\def \F{\mathbb{F}}
\def \K{\mathbb{K}}

\def \Z{\mathbb{Z}}

\def \R{\mathbb{R}}
\def \Q{\mathbb{Q}}

\def \Z{\mathbb{Z}}

\def\mand{\qquad\mbox{and}\qquad}

\def\\{\cr}
\def\({\left(}
\def\){\right)}
\def\fl#1{\left\lfloor#1\right\rfloor}

\def\eps{\varepsilon}

\begin{document}

\title[Congruences with Products  from Short Intervals]{On Congruences with Products of Variables from Short Intervals and Applications}

\author[J.~Bourgain]{Jean~Bourgain}
\address{Institute for Advanced Study,
Princeton, NJ 08540, USA} \email{bourgain@ias.edu}

\author[M.~Z.~Garaev]
{Moubariz~Z.~Garaev}
\address{Centro de Ciencias Matem\'{a}ticas, Universidad Nacional Aut\'onoma de M\'{e}xico,
C.P. 58089, Morelia, Michoac\'{a}n, M\'{e}xico}
\email{garaev@matmor.unam.mx}

\author[S.~V.~Konyagin]{Sergei V.~Konyagin}
\address{Steklov Mathematical Institute,
8, Gubkin Street, Moscow, 119991, Russia} \email{konyagin@mi.ras.ru}

\author[I.~E.~Shparlinski]{Igor E.~Shparlinski}
\address{Department of Computing, Macquarie University,
Sydney, NSW 2109, Australia} \email{igor.shparlinski@mq.edu.au}

\begin{abstract}
We obtain upper bounds on the number of solutions to congruences of
the type
$$
(x_1+s)\ldots(x_{\nu}+s)\equiv  (y_1+s)\ldots(y_{\nu}+s)\not\equiv0
\pmod p
$$
modulo a prime $p$ with variables from some short intervals.
We give some applications of our results and in particular improve
several recent estimates of J.~Cilleruelo and
M.~Z.~Garaev on exponential congruences and on cardinalities of products
of short intervals,  some double character sum estimates of J. Friedlander
and H. Iwaniec and some results of M.-C.~Chang
and A.~A.~Karatsuba on character sums twisted with the divisor function.
\end{abstract}

%

\maketitle

\section{Introduction}

For a prime $p$, let $\F_p$ be the field of residues modulo $p$.
Also, denote $\F_p^*=\F_p\setminus\{0\}$. For integers $h$ and $\nu
\ge 1$ and elements $s \in \F_p$ and $\lambda  \in \F_p^*$, we
denote by $J_{\nu}(p,h,s;\lambda)$ the number of solutions of the
congruence
\begin{equation}
\label{eq:asym cong x,y}
\begin{split}
(x_1+s)\ldots(x_{\nu}+s)\equiv & \lambda \pmod p,\\
1\le x_1,\ldots,&x_{\nu}\le h.
\end{split}
\end{equation}

For large values of $h$, one can use bounds of Kloosterman sums (for
$\nu =2,3$) and multiplicative character sums (for $\nu \ge 4$) to
obtain various asymptotic formulas for  $J_{\nu}(p,h,s;\lambda)$,
see~\cite{Gar,GarGar,LeBd,Shp1,Shp2}. However, this approach does not give
any nontrivial estimates for small values of $h$, and thus Chan and
Shparlinski~\cite{ChanShp}, for $\nu=2$, have employed methods of
additive combinatorics, namely some results of Bourgain~\cite{Bour},
in order to obtain a nontrivial upper bound on
$J_{\nu}(p,h,s;\lambda)$ for any $h$.

Cilleruelo and Garaev~\cite{CillGar} have substantially improved
 the bounds of~\cite{ChanShp}, obtained several results for $\nu =3$
 and also suggested
several conjectures.

Recently, motivated by some applications to certain algorithmic
problems, new results on $J_{\nu}(p,h,s;\lambda)$ have been given by
Bourgain, Garaev, Konyagin and Shparlinski~\cite{BGKS}. In
particular, it is shown in~\cite{BGKS} that for
$$
h<p^{1/(\nu^2-1)},
$$
we have the bound
\begin{equation}
\label{eq:CillCar Conj} J_{\nu}(p,h,s;\lambda)<
\exp\(c(\nu)\frac{\log h}{\log\log h}\),
\end{equation}
uniformly over $s \in \F_p$  and  $\lambda  \in \F_p^*$,
 where  $c(\nu)$ depends only on $\nu=2,3, \ldots$.
In particular, for $\nu = 4$ the bound~\eqref{eq:CillCar Conj}
answers the  open question from~\cite[Section~6]{CillGar}.

Here we use and develop further some ideas of~\cite{BGKS} and study
a symmetric version of the congruence~\eqref{eq:asym cong x,y}. More
precisely, for a prime $p$, integers $h$ and $\nu \ge 1$ and an
element   $s \in \F_p$, we study  the number of solutions
$K_{\nu}(p,h,s)$ of the congruence
\begin{equation}
\label{eq:cong x,y}
\begin{split}
(x_1+s)\ldots(x_{\nu}+s)\equiv & (y_1+s)\ldots(y_{\nu}+s)\not\equiv0 \pmod p,\\
1\le x_1,\ldots,&x_{\nu}, y_1,\ldots,y_{\nu}\le h.
\end{split}
\end{equation}

We note that for $\nu=2$ this question, and its generalizations to
residue rings and arbitrary finite fields,  has been considered in
a number of works~\cite{ACZ,CochSih,FrIw,Kon}. So, although our
argument works for $\nu = 2$ as well, here we concentrate on the
case $\nu\ge 3$.

We believe that our results are of independent interest and  then
may also be used to improve some previous results. For example,
Corollary~\ref{cor:ProdSet Fp} extends the range of $h$ under which
a similar result is obtained in~\cite{BGKS}.

Furthermore, it is easy to see that bounds on  $K_{\nu}(p,h,s)$ can be reformulated
as statements about moments of character sums over the intervals
$[s,s+h]$, for example, see Lemma~\ref{lem:Kss} below. As such,
they also
complement various other results of the type which can be found in
the literature, see~\cite{ACZ,CochSih,CoZh,FrIw} and references therein.
Using the ideas behind our estimates of $K_{\nu}(p,h,s)$ we estimate
the number of solutions of  several other congruences of similar
form which in turn leads to improvements of the  bounds
\begin{itemize}
\item of Cilleruelo and Garaev~\cite[Corollary~3]{CillGar} on the
number of solutions to exponential congruences in small intervals;
\item of Friedlander and
Iwaniec~\cite{FrIw2} on double character sums over subsets of intervals;
\item of Chang~\cite{Chang1} and Karatsuba~\cite{Kar1, Kar4} on the
character sums with the divisor function.
\end{itemize}

\section{Resultant Bound}

For positive integers  $m,n$ with $m,n\ge2$ and
$\sigma\in\R$, we define the $(m+n-2)\times (n-1)$
circulant matrix   $A(m, n,\sigma)$  as follows:
\begin{equation*}
\(
  \begin{array}{cccccccc}
    \sigma & \sigma+1 & \ldots & \sigma + m-1 & 0 & 0 & \ldots & 0 \\
    0 & \sigma  & \ldots  & \sigma +m-2 &\sigma + m-1 & 0 & \ldots & 0  \\
    \ldots & \ldots & \ldots  & \ldots & \ldots & \ldots & \ldots & \ldots \\
    0 & \ldots & 0 & \sigma &\sigma+1 & \ldots  & \ldots &\sigma + m-1\\
  \end{array}
\).
\end{equation*}
We mark all elements located in the intersection of $i$-th row
and $j$-th column if $i\le j\le i+m-1$.
Note that all unmarked elements are zeros and,
conversely, for $\sigma>0$ all zeros are unmarked.

\begin{lemma}
\label{lem:DeterMagic} Let $m,n\ge 2$ be integers and
$\sigma,\vartheta\in\R$. If in the  $(m+n-2)\times (m+n-2)$ matrix
$$
X(m, n)=\(
  \begin{array}{c}
    A(m,n,\sigma)\\
    A(n,m,\vartheta) \\
  \end{array}
\)
$$
we select $m+n-2$ marked elements such that  each row and each column
contains exactly one selected element then the sum of the selected
elements is always equal to
$$
 \Sigma(m,n,\sigma,\vartheta)
=(m-1+\sigma)(n-1+\vartheta) - \sigma\vartheta.
$$
\end{lemma}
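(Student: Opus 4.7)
The plan is to write each marked entry of $X(m,n)$ as a simple affine function of its row and column indices, add up the chosen entries, and observe that the index-dependent parts cancel thanks to the permutation structure of the matching.

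First I would record the value of the marked entry at position $(i,j)$ of $X(m,n)$. Reading off from the circulant pattern, row $i$ of the upper block $A(m,n,\sigma)$ (so $1 \le i \le n-1$) has marked entries in columns $j = i, \ldots, i+m-1$ with value $\sigma + (j-i)$. For the lower block $A(n,m,\vartheta)$ the global row $i$ with $n \le i \le m+n-2$ corresponds to local row $k = i - (n-1)$, and its marked entry in column $j$ equals $\vartheta + (j-k) = \vartheta + (n-1) + (j-i)$. Thus every marked entry is a block-dependent constant plus $(j-i)$.

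Next, an admissible selection corresponds to a permutation $\pi$ of $\{1, \ldots, m+n-2\}$, and the sum of the selected entries equals
$$
\sum_{i=1}^{n-1}\bigl(\sigma + \pi(i) - i\bigr) + \sum_{i=n}^{m+n-2}\bigl(\vartheta + \pi(i) - i + n - 1\bigr).
$$
The block-constant pieces contribute $(n-1)\sigma + (m-1)(\vartheta + n - 1)$, while the variable part $\sum_{i=1}^{m+n-2}(\pi(i) - i)$ vanishes because $\pi$ is a permutation. A direct expansion of $(m-1+\sigma)(n-1+\vartheta) - \sigma\vartheta$ yields exactly $(n-1)\sigma + (m-1)\vartheta + (m-1)(n-1)$, which matches.

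The only point requiring any care is the two-block row indexing: the shift $n-1$ between global rows of $X(m,n)$ and local rows of $A(n,m,\vartheta)$ is what produces the bilinear cross-term $(m-1)(n-1)$ in the final formula. Once that reindexing is written out, the rest is a one-line cancellation coming from the fact that every marked entry splits as a block-constant plus a ``column minus row'' term and $\sum_{i}(\pi(i)-i) = 0$ for any permutation $\pi$.
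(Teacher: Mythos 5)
Your proof is correct and uses essentially the same idea as the paper: each marked entry of $X(m,n)$ is a block-dependent constant plus $(j-i)$, and summing over a permutation kills the $(j-i)$ part. The paper reaches the same conclusion slightly less directly, by first showing the diagonal sum equals $\Sigma(m,n,\sigma,\vartheta)$ and then transforming to an auxiliary matrix $Y$ with $y_{i,j}=i+j$ on marked positions to establish selection-independence; your version collapses that detour into one explicit summation, but the underlying cancellation ($\sum_i\pi(i)=\sum_i i$) is identical.
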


\begin{proof}
Let
$$
X(m, n)=(x_{i,j})_{1\le i,j\le m+n-2},
$$
where $i$ indicates the row. Since the sum of the diagonal elements
of $X(m, n)$ is equal to $(m-1+\sigma)(n-1+\vartheta) - \sigma\vartheta$,
it suffices to prove that the sum of the selected elements does not
depend on the choice of selection. To see this, we transform the
matrix $X(m, n)$ into  a matrix
$$
Y(m, n)=(y_{i,j})_{1\le i,j\le m+n-2}
$$
as follows
\begin{itemize}
\item  If $x_{i,j}$ is unmarked, then we put $y_{i,j} =0$
\item If $x_{i,j}$ is marked, then we put
$$y_{i,j} = \left\{
\begin{array}{ll}
x_{i,j} + 2i-\sigma, & \text{ for } 1\le i\le n-1,\\
x_{i,j} + 2i-n+1-\vartheta, & \text{ for }   n\le i\le m+n-2.
\end{array}
\right.
$$
\end{itemize}

Since the selected elements occur in each row exactly once, from this
transformation of $X(m,n)$ into $Y(m,n)$ the sum of the
elements at the marked positions changes only by
$$\sigma_1 =\sum_{i =1}^{n-1} ( 2i-\sigma)  +\sum_{i=n}^{m+n-2}
(2i-n +1-\vartheta)
$$
and in particular does not depend on the choice of the selection.
Therefore, it suffices to show that the sum of corresponding selected
elements of $Y(m,n)$ does not depend on the choice of selection.
But this follows from the observation that when $x_{ij}$ is marked, we
have that
$$
y_{i,j}=i+j.
$$
Hence, the sum of the corresponding selected elements of $Y(m,n)$
is equal to
$$
\sigma_2 = 2(1+\ldots +(m+n-2)) = (m+n-1)(m+n-2)
$$
and does not depend on the choice of selection. Since $\sigma_2 -
\sigma_1 = \sigma$, the result now follows.
\end{proof}

We  need the following simple statement.

\begin{lemma}
\label{lem:neq:sigma} Let $M\ge m\ge 2$, $N\ge n\ge 2$ be integers,
$\sigma+M-m\ge0$, $\vartheta+N-n\ge0$. Assume also that one of the
following conditions hold:
\begin{itemize}
\item[(i)] $\sigma\ge0$;
\item[(ii)]$\vartheta\ge0$;
\item[(iii)] $\sigma+\vartheta\ge -1$.
\end{itemize}
Then $\Sigma(M,N,\sigma,\vartheta)\ge \Sigma(m,n,\sigma+M-m,\vartheta+N-n)$.
\end{lemma}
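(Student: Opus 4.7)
The plan is to reduce the stated inequality to a simple algebraic comparison and then split into cases according to the three hypotheses.

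First I would compute the difference explicitly. Setting $a=\sigma+M-m$ and $b=\vartheta+N-n$, the defining formula gives
$$
\Sigma(M,N,\sigma,\vartheta)=(M-1+\sigma)(N-1+\vartheta)-\sigma\vartheta,
$$
while
$$
\Sigma(m,n,a,b)=(m-1+a)(n-1+b)-ab=(M-1+\sigma)(N-1+\vartheta)-ab,
$$
so that
$$
\Sigma(M,N,\sigma,\vartheta)-\Sigma(m,n,\sigma+M-m,\vartheta+N-n)=ab-\sigma\vartheta.
$$
It therefore suffices to prove $ab\ge\sigma\vartheta$ in each case, remembering the standing data $a,b\ge 0$, $a-\sigma=M-m\in\Z_{\ge 0}$, and $b-\vartheta=N-n\in\Z_{\ge 0}$.

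The cases (i) and (ii) are symmetric, so I would just treat (i). If $\sigma\ge 0$ and $\vartheta\ge 0$, then $a\ge\sigma\ge 0$ and $b\ge\vartheta\ge 0$, so $ab\ge\sigma\vartheta$. If $\sigma\ge 0$ but $\vartheta<0$, then $\sigma\vartheta\le 0\le ab$, and again $ab\ge\sigma\vartheta$.

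The real content is case (iii), and this is where I would spend the effort. If either of $\sigma,\vartheta$ is non-negative we are back in case (i) or (ii), so I may assume $\sigma<0$ and $\vartheta<0$. Now $\sigma+\vartheta\ge -1$ forces $\sigma>-1$ and $\vartheta>-1$. The key observation is that $M-m$ is a non-negative \emph{integer} and $\sigma+M-m\ge 0$; combined with $\sigma>-1$ this forces $M-m\ge 1$, hence $a=\sigma+(M-m)\ge\sigma+1>0$. The symmetric argument gives $b\ge\vartheta+1>0$. Consequently
$$
ab\ge(\sigma+1)(\vartheta+1)=\sigma\vartheta+(\sigma+\vartheta+1)\ge\sigma\vartheta,
$$
the last inequality being exactly the assumption of (iii).

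The main obstacle, and the reason the lemma is not entirely routine, is case (iii): if $M,m,N,n$ were allowed to vary continuously the inequality would fail for small negative $\sigma,\vartheta$, and it is precisely the integrality of $M-m$ and $N-n$ that upgrades the bounds $a\ge\sigma$, $b\ge\vartheta$ to $a\ge\sigma+1$, $b\ge\vartheta+1$ and closes the argument.
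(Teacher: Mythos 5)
Your proof is correct and follows the same route as the paper: you reduce the inequality to $(\sigma+M-m)(\vartheta+N-n)\ge\sigma\vartheta$ and then verify it under each of (i)--(iii). The paper leaves the case analysis implicit, simply asserting that each condition implies the product inequality; your observation that case (iii) genuinely relies on the integrality of $M-m$ and $N-n$ (to upgrade $\sigma+M-m\ge 0$ with $-1<\sigma<0$ to $M-m\ge 1$, hence $\sigma+M-m\ge\sigma+1$) is exactly the substantive point the paper glosses over, and your counterexample-style remark at the end correctly identifies why the lemma would fail without it.
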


\begin{proof} Clearly,
\begin{equation*}
\begin{split}
\Sigma(M,N,\sigma,\vartheta) &- \Sigma(m,n,\sigma+M-m,\vartheta+N-n)\\
=(\sigma+M-m)&(\vartheta+N-n) - \sigma\vartheta \ge0.
\end{split}
\end{equation*}
Since either of the conditions~(i)--(iii) implies
$$
(\sigma+M-m)(\vartheta+N-n)\ge\sigma\vartheta,
$$
 the result follows.
\end{proof}

\begin{cor}
\label{cor:DeterMagic} Let $H\ge1$,
$\sigma,\vartheta\in\R$, and let  $M,N\ge2$ be fixed integers.
Assume that either of the conditions (i)--(iii) of Lemma~\ref{lem:neq:sigma}
is satisfied. Let
$P_1(Z)$ and $P_2(Z)$ be non-constant polynomials,
$$
P_1(Z)=\sum_{i=0}^{M-1}a_{i}Z^{M-1-i} \mand P_2(Z)=
\sum_{i=0}^{N-1}b_{i}Z^{N-1-i}
$$
such that
$$ |a_{i}|<H^{i+\sigma}, \quad i =0, \ldots, M-1,$$
$$ |b_{i}|<H^{i+\vartheta}, \quad i =0, \ldots, N-1.$$
Then
$$
\Res(P_1, P_2)\ll H^{\Sigma(M,N,\sigma,\vartheta)},
$$
where the implicit constant in $\ll$ depends only on $M$ and $N$.
\end{cor}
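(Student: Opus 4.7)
The plan is to realize $\Res(P_1, P_2)$ as a Sylvester determinant with respect to the true degrees of $P_1$ and $P_2$, expand it over permutations, bound each term via Lemma~\ref{lem:DeterMagic}, and then use Lemma~\ref{lem:neq:sigma} to absorb the gap between the formal degrees $M-1, N-1$ and the actual degrees.

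First I would pass to actual degrees. Let $m - 1 := \deg P_1$ and $n - 1 := \deg P_2$; non-constancy gives $2 \le m \le M$ and $2 \le n \le N$. Setting $\sigma' := \sigma + M - m$ and $\vartheta' := \vartheta + N - n$, the coefficient bounds rewrite as $|a_{M - m + j}| < H^{j + \sigma'}$ for $j = 0, \ldots, m-1$ and $|b_{N - n + j}| < H^{j + \vartheta'}$ for $j = 0, \ldots, n-1$. I would then write down the $(m + n - 2) \times (m + n - 2)$ Sylvester matrix of $P_1, P_2$ formed from their true degrees, whose determinant equals $\pm\Res(P_1, P_2)$.

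The crucial combinatorial observation is that the nonzero pattern of this Sylvester matrix coincides with the marked positions of the block matrix $X(m, n)$ from Lemma~\ref{lem:DeterMagic} with the shifted parameters $(\sigma', \vartheta')$, and the absolute value of each nonzero entry is bounded by $H$ raised to the exponent recorded at the corresponding marked slot. Expanding the determinant as $\sum_\pi \mathrm{sgn}(\pi)\prod_i \mathrm{Syl}_{i,\pi(i)}$, only those permutations $\pi$ whose graph lies entirely in marked positions contribute nonzero terms; for any such $\pi$, Lemma~\ref{lem:DeterMagic} asserts that the sum of recorded exponents equals the common value $\Sigma(m, n, \sigma', \vartheta')$, so each surviving term is bounded by $H^{\Sigma(m, n, \sigma', \vartheta')}$. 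Collecting the at most $(M + N - 2)!$ terms yields
$$
|\Res(P_1, P_2)| \ll H^{\Sigma(m, n, \sigma', \vartheta')},
$$
with implicit constant depending only on $M$ and $N$.

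To finish I would invoke Lemma~\ref{lem:neq:sigma}: conditions (i)--(iii) are inherited directly from the hypothesis, while $\sigma + M - m \ge 0$ and $\vartheta + N - n \ge 0$ follow from the lower bounds $|a_{M-m}|, |b_{N-n}| \ge 1$ on the nonvanishing leading coefficients in the integer-coefficient setting of interest (the case $H = 1$ being vacuous). The lemma gives $\Sigma(m, n, \sigma', \vartheta') \le \Sigma(M, N, \sigma, \vartheta)$, and since $H \ge 1$ this upgrades the bound to the desired form. The most delicate step I anticipate is the bookkeeping identification of the Sylvester nonzero pattern with the marked positions of $X(m, n)$; once that is established, the invariance of the exponent sum provided by Lemma~\ref{lem:DeterMagic} makes every permutation term satisfy exactly the same bound.
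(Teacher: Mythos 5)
Your proposal is correct and follows essentially the same route as the paper: pass to the actual degrees $m-1 = \deg P_1$, $n-1 = \deg P_2$, form the $(m+n-2)\times(m+n-2)$ Sylvester determinant, observe via the Leibniz expansion that every nonzero term is bounded by $H^{\Sigma(m,n,\sigma+M-m,\vartheta+N-n)}$ thanks to Lemma~\ref{lem:DeterMagic}, and then apply Lemma~\ref{lem:neq:sigma} (with $|a_{M-m}|,|b_{N-n}|\ge 1$ supplying $\sigma+M-m\ge 0$, $\vartheta+N-n\ge 0$) to bound the exponent by $\Sigma(M,N,\sigma,\vartheta)$. The paper's proof is the same argument, stated more tersely.
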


\begin{proof}
Let $m-1=\deg P_1$ and  $n-1=\deg P_2$. We have $2\le m\le M$,
$2\le n\le N$. The inequalities $|a_{M-m}|\ge1$ and $|b_{N-n}|\ge1$
imply $\sigma+M-m\ge0$ and $\vartheta+N-n\ge0$, respectively. We recall that
$$
\Res(P_1, P_2)=\det\(
\begin{array}{c}  A\\  B\\ \end{array}\),
$$
where
\begin{equation*}
A=\(
  \begin{array}{cccccccc}    a_{M-m} &  \ldots &  a_{M-2} &a_{M-1} & 0 & 0 & \ldots & 0 \\
    0 & a_{M-m}&  \ldots &  a_{M-2} &a_{M-1} & 0 & \ldots & 0  \\
    \ldots & \ldots & \ldots  & \ldots & \ldots & \ldots & \ldots & \ldots \\
    0 & \ldots & 0 & a_{M-m} &  \ldots & \ldots &  a_{M-2} &a_{M-1}\\
  \end{array}
\)
\end{equation*}
and
\begin{equation*}
B=\(
  \begin{array}{cccccccc}   b_{N-n} & \ldots & b_{N-2}  &  b_{N-1}  & 0 & 0 & \ldots & 0 \\
    0 & b_{N-n} & \ldots & b_{N-2}  &  b_{N-1} & 0 & \ldots & 0  \\
    \ldots & \ldots & \ldots  & \ldots & \ldots & \ldots & \ldots & \ldots \\
    0 & \ldots & 0 & b_{N-n} & \ldots & \ldots & b_{N-2}  &  b_{N-1}\\
  \end{array}
\)
\end{equation*}
are $(m+n-2)\times (n-1)$ and $(m+n-2)\times (m-1)$ matrices,
respectively. The result now follows from the representation of the
determinant by sums of products of its elements and
Lemmas~\ref{lem:DeterMagic} and~\ref{lem:neq:sigma}.
\end{proof}

\section{More General Congruences}

To estimate $K_{\nu}(p,h,s)$ we sometimes have to study a more
general congruence. For a prime $p$, integers $h$ and $\nu \ge 1$
and  a vector $\vec{s} = (s_1, \ldots, s_\nu) \in \F_p$ we denote by
$K_{\nu}(p,h,\vec{s})$ the number of solutions of the congruence
\begin{equation*}
\begin{split}
(x_1+s_1)\ldots(x_{\nu}+s_{\nu})\equiv & (y_1+s_1)\ldots(y_{\nu}+s_{\nu})\not\equiv0 \pmod p,\\
1\le x_1,\ldots,&x_{\nu}, y_1,\ldots,y_{\nu}\le h.
\end{split}
\end{equation*}

This following simple statement relates $K_{\nu}(p,h,\vec{s})$ and
$K_{\nu}(p,h,s_j)$, $j =1, \ldots, \nu$.

\begin{lemma}
\label{lem:Kss} We have
$$
K_{\nu}(p,h,\vec{s}) \le \prod_{j=1}^\nu  K_{\nu}(p,h,s_j)^{1/\nu}
$$
\end{lemma}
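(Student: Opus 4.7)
The plan is to pass to the multiplicative-character side, where the multiplicative structure of the defining congruence factorises across the $\nu$ coordinates, and then extract the product of $\nu$-th roots from a single application of H\"older's inequality.

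First, for each $s\in\F_p$ and each multiplicative character $\chi$ modulo $p$ (adopting the standard convention $\chi(0)=0$), I would introduce the incomplete character sum
$$
T_s(\chi)=\sum_{x=1}^{h}\chi(x+s).
$$
Using the orthogonality relation
$$
\frac{1}{p-1}\sum_{\chi}\chi(a)\bar\chi(b)=\mathbf{1}\bigl[a\equiv b\not\equiv 0\pmod p\bigr]
$$
together with the multiplicativity $\chi\!\left(\prod_j(x_j+s_j)\right)=\prod_j\chi(x_j+s_j)$, a direct expansion yields the two identities
$$
K_{\nu}(p,h,\vec{s})=\frac{1}{p-1}\sum_{\chi}\prod_{j=1}^{\nu}\bigl|T_{s_j}(\chi)\bigr|^{2},\qquad
K_{\nu}(p,h,s)=\frac{1}{p-1}\sum_{\chi}\bigl|T_{s}(\chi)\bigr|^{2\nu}.
$$
The convention $\chi(0)=0$ is precisely what enforces the nonvanishing condition $\prod_j(x_j+s_j)\not\equiv 0\pmod p$ automatically, so no extra bookkeeping is required.

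Next I would apply H\"older's inequality to the first identity with $\nu$ equal exponents, viewing the integrand as a product of $\nu$ nonnegative functions of $\chi$:
$$
\sum_{\chi}\prod_{j=1}^{\nu}\bigl|T_{s_j}(\chi)\bigr|^{2}\le\prod_{j=1}^{\nu}\!\left(\sum_{\chi}\bigl|T_{s_j}(\chi)\bigr|^{2\nu}\right)^{\!1/\nu}.
$$
Substituting the second identity on the right, I obtain
$$
(p-1)\,K_{\nu}(p,h,\vec{s})\le\prod_{j=1}^{\nu}\bigl((p-1)\,K_{\nu}(p,h,s_j)\bigr)^{1/\nu},
$$
and the factors of $p-1$ cancel exactly, producing the claimed inequality.

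There is essentially no obstacle here: this is a two-line application of H\"older's inequality once the character decomposition is in place, and the only point requiring a moment of care is the $\chi(0)=0$ convention that correctly encodes the nonvanishing constraint in the character-sum identities.
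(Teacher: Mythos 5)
Your proof is correct and follows essentially the same route as the paper: expand $K_\nu$ via multiplicative-character orthogonality so that the sum over $\chi$ factorises across the $\nu$ coordinates, then apply H\"older's inequality with exponent $\nu$ and recognise the resulting diagonal quantities as $K_\nu(p,h,s_j)$. The only cosmetic difference is that the paper writes the inner factor as $\sum^*_{x_j,y_j}\chi\bigl((x_j+s_j)/(y_j+s_j)\bigr)$ with a starred sum excluding $y_j\equiv -s_j$, whereas you phrase it as $|T_{s_j}(\chi)|^2$ with the convention $\chi(0)=0$; these are the same thing.
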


\begin{proof}
Using the orthogonality of multiplicative characters, we write
\begin{equation*}
\begin{split}
K_{\nu}(p,h,\vec{s}) & = \frac{1}{p-1} \sum_{1\le
x_1,\ldots,x_{\nu}, y_1,\ldots,y_{\nu}\le h}\hskip -30pt{}^* \hskip
30pt
\sum_{\chi} \chi\(\frac{(x_1+s_1)\ldots(x_{\nu}+s_{\nu})}{(y_1+s_1)\ldots(y_{\nu}+s_{\nu})}\)\\
& = \frac{1}{p-1} \sum_{\chi}  \prod_{j=1}^\nu \sum_{1\le x_j,y_j\le
h}\hskip -10pt{}^* \hskip 10pt \chi\(\frac{x_j+s_j}{y_j+s_j}\),
\end{split}
\end{equation*}
where $\chi$ runs through all multiplicative characters modulo $p$
and $\Sigma^*$ indicates that summation does not involve $y_j\equiv
-s_j \pmod p$. Using the H{\"o}lder inequality, we obtain the
desired inequality.
\end{proof}

\section{Linear Congruences with Many Solutions}

We  need the following result, which in turn improves one
of the results from~\cite{ChangCillGHShZ}.

\begin{lemma}
\label{lem:LinearCongr} Let   $\gamma \in  (0,1)$ and let $I$ and
$J$ be two intervals containing $h$ and $H$ consecutive integers,
respectively, and  such that
$$
h\le H<\frac{\gamma \,p}{15}.
$$
Assume that for some integer $s$ the congruence
$$
y\equiv sx\pmod p
$$
has at least $\gamma h+1$ solutions in $x\in I$, $y\in J$.
Then there exist integers $a$ and $b$ with
$$
|a| \le \frac{H}{\gamma\,h},
\qquad 0<b\le \frac{1}{\gamma},
$$
such that
$$
s\equiv a/b \pmod p.
$$
\end{lemma}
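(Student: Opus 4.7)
The plan is to recast the problem in the language of geometry of numbers. Set $\Lambda := \{(u,v) \in \Z^2 : v \equiv su \pmod{p}\}$, a rank-$2$ lattice of covolume $p$. The $M \ge \gamma h + 1$ solutions produce $M$ points of $\Lambda$ inside the rectangle $I \times J$; subtracting any one of them gives $M - 1 \ge \gamma h$ distinct nonzero lattice vectors in the symmetric box $B := (-h,h) \times (-H,H)$. Sorting the $M$ solutions by $x$-coordinate, the $M - 1$ consecutive positive gaps in the sort sum to at most $h - 1$, so at least one of them is $\le (h-1)/(M-1) \le 1/\gamma$. This gap furnishes a lattice vector $(b,a) \in B$ with $0 < b \le 1/\gamma$ and $|a| \le H - 1$.

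Next I would establish a collinearity principle for \emph{short} lattice vectors: for any $(b,a), (b',a') \in \Lambda$ with $b,b' \le 1/\gamma$ and $|a|,|a'| < H$, the determinant $ab' - a'b$ is divisible by $p$ (both sides reduce to $sbb'$ modulo $p$), while
\[
|ab' - a'b| \le |a|b' + |a'|b < \frac{2H}{\gamma} < \frac{2(\gamma p/15)}{\gamma} = \frac{2p}{15} < p,
\]
forcing $ab' = a'b$. Hence all such short vectors are positive integer multiples of a unique primitive $v_0 := (b_0, a_0) \in \Z^2$ with $\gcd(b_0,a_0)=1$, $0 < b_0 \le 1/\gamma$ and $s \equiv a_0/b_0 \pmod{p}$. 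The same determinant computation applied to \emph{any} two vectors of $B$ shows moreover that when $2hH < p$ (equivalently, $\gamma h < 15/2$), every lattice point of $\Lambda \cap B$ is a multiple of $v_0$.

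In this collinear regime all $M$ solutions lie in a single residue class modulo $b_0$, so the extremal pair in that class yields a lattice vector $(k^* b_0, k^* a_0) \in B$ with $k^* \ge M - 1 \ge \gamma h$; then $|k^* a_0| \le H - 1$ gives $|a_0| \le (H-1)/(\gamma h) \le H/(\gamma h)$, and $k^* b_0 \le h - 1$ gives $b_0 \le (h-1)/(\gamma h) \le 1/\gamma$, and we are done upon setting $a := a_0$, $b := b_0$. The main delicate point is the complementary range $\gamma h \ge 15/2$, where $B$ may contain two linearly independent lattice vectors and the solutions may spread across several residue classes mod $b_0$. I expect to handle it by invoking Minkowski's second theorem together with the standard count $|\Lambda \cap B| \le \prod_i (2\lfloor 1/\lambda_i \rfloor + 1)$ applied to the successive minima of $\Lambda$ relative to $B$: since $|\Lambda \cap B| \ge M$, the first successive minimum must satisfy $\lambda_1 \le 1/(\gamma h)$, with the constant loss absorbed into the factor $15$ baked into the hypothesis $H < \gamma p/15$, and the primitive vector attaining $\lambda_1$ then satisfies $b_0 \le 1/\gamma$ and $|a_0| \le H/(\gamma h)$ simultaneously.
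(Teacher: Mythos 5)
You take a genuinely different route from the paper: geometry of numbers on the lattice $\Lambda = \{(u,v)\in\Z^2 : v\equiv su\pmod p\}$, whereas the paper works entirely elementarily (it extracts a small fraction $a/b\equiv s$ from one pair of solutions, splits on whether $L:=(|a|h+bH)/p<1$, and rules out $L\ge 1$ by pigeonholing on the value of $z$ in $ax=by+pz$). Your argument in the range $2hH<p$, equivalently $\gamma h<15/2$, is correct and clean: the determinant bound forces all of $\Lambda\cap B$ onto a line, and the extremal-pair observation then delivers the exact constants $b_0\le 1/\gamma$ and $|a_0|\le H/(\gamma h)$.

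The complementary range is a genuine gap, and the Minkowski repair you propose points in the \emph{wrong} direction. For the symmetric box $B$, Minkowski's second theorem gives the lower bound $\lambda_1\lambda_2\ge 2p/\mathrm{vol}(B)>p/(2hH)>15/(2\gamma h)$, the last step using $H<\gamma p/15$. Hence if $\lambda_2\le 1$, then $\lambda_1\ge\lambda_1\lambda_2>15/(2\gamma h)>1/(\gamma h)$, which is the negation of the desired estimate, not the estimate itself. Moreover, combining $\#(\Lambda\cap B)\ge\gamma h+1$ with the count of Lemma~\ref{lem:latp} yields, when $\lambda_1\le\lambda_2\le1$, only $\lambda_1\lambda_2<15/(\gamma h)$; this sits inside the Minkowski window $15/(2\gamma h)<\lambda_1\lambda_2<15/(\gamma h)$, so the two tools together produce no contradiction and cannot force $\lambda_1\le 1/(\gamma h)$ or $\lambda_2>1$.

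What saves the argument is not symmetric geometry of numbers but the one-sided position of the solutions, which the passage to the difference body $B$ throws away. The $M$ solutions lie in a translate of a box of area $<hH<\gamma h\,p/15$, while if they were not collinear their convex hull would have area at least $p(M-2)/2\ge p(\gamma h-1)/2$ by Pick's theorem for $\Lambda$; comparing gives $13\gamma h<15$, and for $\gamma h<15/13$ one has $2hH<2\gamma h\,p/15<p$, so your determinant argument already applies. Thus the $M$ solutions are collinear over the whole range, and your extremal-pair step then finishes without modification. To close the gap you must replace the Minkowski heuristic by an argument of this kind — or by the paper's pigeonholing over $z$, which is its elementary analogue — that exploits the fact that the solutions are anchored at a corner of the box rather than spread symmetrically about the origin.
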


\begin{proof}
We can assume that $s\not\equiv 0\pmod p$, as otherwise the statement
is trivial. Making a shift of the set $I\times J$ by the solution
$(x_0,y_0)$ of our congruence with the least $x_0$ (here we use a natural
ordering on $I$), without loss of generality we can assume that
$I\subseteq [0,h]$, $J\subseteq [-H,H]$. Since $s\not\equiv 0\pmod p$
and our congruence has a solution with $x\not=0$, there exist integers $a,b$ such that
$$
s\equiv a/b \pmod p,\quad 0<|a|\le H, \quad 0<b\le h, \quad \gcd(a,b)=1.
$$
Thus, the equation
$$
ax=by+pz
$$
has at least $\gamma h+1$ solutions in integer variables $x,y,z$ with $x\in I$, $y\in J$. We have
$$
|z|\le L,
$$
where
$$
L = \frac{|a|h+bH}{p}.
$$
We consider two cases, $L<1$ and $L\ge 1$.

{\it Case~1\/}: $L<1$. Then $z=0$ and we get that the equation $ax=by$ has at least $\gamma h+1$ solutions in $x\in I, y\in J$. Since $\gcd(a,b)=1$, we get that $x=bw$, $y=aw$ for some integer $w$ and this should hold for at least $\gamma h+1$ integers $w
$  (as there are at least $\gamma h+1$ pairs $(x,y)$). Therefore, $b\gamma h\le h$ and $|a|\gamma h\le H$ and the result follows.

{\it Case~2\/}: $L\ge 1$. Note that
$$
L\le \frac{2hH}{p}\le \frac{2\gamma h}{15}.
$$
Thus, by the pigeon-hole principle, there exists $z=z_0$ such that the equation
$ax=by+pz_0$
has at least $\gamma h/(3L)$ solutions in variables $x\in I,y\in J$. We fix one such solution $(x_0,y_0)\in I\times J$ and get that the equality
$$
a(x-x_0)=b(y-y_0),
$$
holds for at least $\gamma h/(3L)$ pairs $x,y$ with $|x-x_0|\le h, |y-y_0|\le H$. Since $\gcd(a,b)=1$, the equality implies
$$
x-x_0=bw, \quad y-y_0=aw,
$$
and this holds for at least $\gamma h/(3L)$ integers $w$. In particular, $|aw|\le H$ and $|bw|\le h$ for at least $\gamma h/(3L)$ integers $w$.  Clearly, one of these integers $w$ satisfies $|w|> \gamma h/(7L)$ and we therefore get
$$
|a|\gamma h<7LH,\quad b\gamma < 7L.
$$
Together with the definition of $L$, this implies that
$$
\gamma p  =\frac{\gamma |a|h+\gamma bH}{L}<14H,
$$
contradicting the condition of our lemma.
\end{proof}

\section{Congruences with Solution in Arbitrary Sets}

We  now use Lemma~\ref{lem:LinearCongr}
to obtain a version of Theorem~\ref{thm:GProdSet} below with $\nu = 2$,
which applies to exponential congruences with variables from short intervals.

\begin{lemma}
\label{lem:GProdSet nu=2} Let $\cX\subseteq [1,h]$ be a set of integers with
$h^3/(\# \cX)<0.002p$.
Then for the number of solutions $L(p,\cX;s) $ of the congruence
\begin{equation}
\label{eq:two}
(x_1+s)(x_2+s)\equiv (y_1+s)(y_2+s)\not\equiv 0  \pmod p,\quad
x_1,x_2,y_1,y_2\in \cX
\end{equation}
we have
$$
L(p,\cX;s) \le (\# \cX)^2 \exp\(C\log h/\log\log h\),
$$
where  $C$ is an absolute constant.
\end{lemma}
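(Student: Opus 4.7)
The plan is to parameterize $L(p,\cX;s)$ by the ratio $t\equiv(x+s)/(y+s)\pmod{p}$. Rewriting the congruence $(x_1+s)(x_2+s)\equiv(y_1+s)(y_2+s)$ as $(x_1+s)/(y_1+s)\equiv(y_2+s)/(x_2+s)$ and calling the common value $t$, one obtains
\[
L(p,\cX;s)=\sum_{t\in\F_p^*}R(t)^2,\qquad R(t)=\#\{(x,y)\in(\cX')^2:(x+s)\equiv t(y+s)\pmod{p}\},
\]
where $\cX'=\cX\setminus\{-s\bmod p\}$. The diagonal term $t=1$ is the expected main contribution, $R(1)^2=(\#\cX')^2\le(\#\cX)^2$, so the task reduces to controlling $\sum_{t\ne 1}R(t)^2$.

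For $t\ne 1$ I would apply Lemma~\ref{lem:LinearCongr} with $I=J=\{1+s,\ldots,h+s\}\bmod p$, so that $h=H$ in the notation there. The hypothesis $h^3/\#\cX<0.002p$ and the trivial $\#\cX\le h$ give $h^2<0.002p$, and thus for every integer $K\ge 2$ the choice $\gamma=(K-1)/h$ satisfies $h<\gamma p/15$. The lemma then forces $t\equiv a/b\pmod{p}$ with $|a|,b\le h/(K-1)$, yielding $|\{t\in\F_p^*:R(t)\ge K\}|\ll h^2/K^2$. Combined with the pigeon-hole bound $|\{t:R(t)\ge K\}|\le(\#\cX)^2/K$ (immediate from $\sum_tR(t)\le(\#\cX)^2$) and a dyadic decomposition over $K\in\{2,4,\ldots,\#\cX\}$, this gives
\[
\sum_{t\ne 1,\,R(t)\ge 2}R(t)^2\ll\min(h^2\log h,\,(\#\cX)^3).
\]
When $\#\cX$ is comparable to $h$ the right-hand side is already $\le(\#\cX)^2\exp(C\log h/\log\log h)$ and the argument concludes.

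The delicate regime is when $\cX$ is very sparse, where I would refine the analysis fraction by fraction. Since $(|a|+b)(h-1)\le 2h^2<0.004p$, the range of $b(x+s)-a(y+s)$ over $(x,y)\in[1,h]^2$ contains at most one multiple of $p$, so the congruence $b(x+s)\equiv a(y+s)\pmod{p}$ is equivalent to a single linear Diophantine equation $bx-ay=c$. Its solutions in $\Z^2$ form one arithmetic progression $(x_0+au,\,y_0+bu)$, so $R(a/b)$ is the number of $u\in\Z$ for which both coordinates lie in $\cX$. Summing $R(a/b)^2$ over reduced fractions with $\max(|a|,b)$ in dyadic ranges of $M$ is then a divisor-style problem; Corollary~\ref{cor:DeterMagic}, applied to polynomials encoding several simultaneously extremal fractions, should rule out the rare $(a,b)$-configurations that would otherwise produce too many fractions with $R(a/b)$ near its maximum.

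The main obstacle is exactly this last step: the direct bound $R(a/b)\le h/\max(|a|,b)+1$ from Lemma~\ref{lem:LinearCongr} is too crude, and what is needed is an averaged estimate showing $\sum_{\max(|a|,b)\asymp M}R(a/b)^2\ll(\#\cX)^2M^{-1}\exp(O(\log h/\log\log h))$. Achieving this, either by iterating Lemma~\ref{lem:LinearCongr} after localizing $(a,b)$ dyadically, or by invoking Corollary~\ref{cor:DeterMagic} for polynomials in the coefficients, is where the hypothesis $h^3/\#\cX<0.002p$ must be used essentially---not merely to verify the applicability of Lemma~\ref{lem:LinearCongr} but to supply the structural rigidity that prevents the dyadic sum from blowing up.
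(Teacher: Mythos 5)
Your proposal parameterizes $L(p,\cX;s)$ by the ratio $t\equiv(x+s)/(y+s)\pmod p$ and tries to bound the energy $\sum_t R(t)^2$ level set by level set. As you yourself observe, this cannot close when $\cX$ is sparse: the bound $\#\{t:R(t)\ge K\}\ll h^2/K^2$ from Lemma~\ref{lem:LinearCongr}, even combined with the pigeonhole bound $(\#\cX)^2/K$, gives only $\sum_{t\ne 1}R(t)^2\ll\min(h^2\log h,\,(\#\cX)^3)$, which exceeds $(\#\cX)^2\exp(O(\log h/\log\log h))$ as soon as $\#\cX$ is noticeably smaller than $h$. The repair you sketch is not carried out, and invoking Corollary~\ref{cor:DeterMagic} here is a mismatch: that resultant bound is the $\nu\ge4$ machinery and plays no role in the paper's $\nu=2$ argument.

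The paper takes an essentially different route that uses $h^3/\#\cX<0.002p$ structurally rather than just to check hypotheses. It restricts to solutions with $x_i\ne y_j$, writes $(x_1+Z)(x_2+Z)-(y_1+Z)(y_2+Z)=uZ-v$ with $u=x_1+x_2-y_1-y_2$, $v=y_1y_2-x_1x_2$, and fixes $x_1=x_1^*$ by pigeonhole to capture at least $N/X$ solutions, where $X=\#\cX$. Evaluating at $Z=-x_1^*$ and using a divisor bound shows each pair $(u,v)$ arises from at most $\exp(O(\log h/\log\log h))$ of those solutions; so if $N$ were too large there would be at least $X$ distinct pairs $(u,v)$ with $0<|u|<2h$, $0<|v|<h^2$ and $us\equiv v\pmod p$. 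Lemma~\ref{lem:LinearCongr} is then applied \emph{once} with $\gamma=X/(6h)$ to extract a single representative $s\equiv a/b\pmod p$ with $|a|\le 6h^2/X$ and $0<|b|\le 6h/X$; note these bounds improve as $X$ shrinks, which is exactly the sparse regime that defeats your approach. Multiplying the original congruence by $b$ then produces an integer $(x_1+x_2-y_1-y_2)a+(x_1x_2-y_1y_2)b$ of absolute value $O(h^3/X)<p$ (here the hypothesis enters), hence zero, so the congruence lifts to the exact equation $(bx_1+a)(bx_2+a)=(by_1+a)(by_2+a)$ over $\Z$ and the divisor function finishes. The missing idea in your write-up is this one-time extraction of a small rational representative of $s$ followed by the lift to an integer equation; a level-set analysis of $R(t)$ does not see it.
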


\begin{proof}
Clearly, it is enough to estimate the contribution $N$
to $L(p,\cX;s)$ of solutions of~\eqref{eq:two} with $x_i\not=y_j$,
$1 \le i,j \le 2$.

Let $X = \# \cX$. We also assume that
$$
N > X^2\exp\(c_0\frac{\log h}{\log\log h}\)
$$
for some large constant $c_0$ that is to be specified later. Observe that
the last inequality implies
$$
X > \exp\(c_0\frac{\log h}{\log\log h}\)
$$
due to the trivial estimate $N\le X^3$.

Note that for any $Z$ we have
$$
(x_1+Z)(x_2+Z)-(y_1+Z)(y_2+Z)=uZ-v,
$$
where
$$u=x_1+x_2-y_1-y_2,\quad v=y_1y_2-x_1x_2.$$

By the pigeon-hole principle we have
at least $N/X$ solutions of~\eqref{eq:two} with the same $x_1=x_1^*$. We claim
that any pair $(u,v)$ induced by these solutions occurs at most
$\exp\(c_0 \log h/\log\log h\)$ times for some constant
$c_0$. Indeed, fix a pair $(u,v)$ and take $Z=-x_1^*$. We get
\begin{equation}
\label{eq: repr M1} uZ-v = -(y_1-x_1^*)(y_2-x_1^*).
\end{equation}
The number of solutions to~\eqref{eq: repr M1} is bounded by
$\exp\(c_0\log h/\log\log h\)$. Each solution determines the numbers
$y_1,y_2$ and the polynomial $P$, and for each $y_1,y_2$ we retrieve $x_2$.
This proves the claim.

Therefore, there are at least $N\exp\(-c_0 \log h/\log\log h\)/X\ge X$ pairs
$(u,v)$ with
$$
0<|u|<2h,\quad 0<|v|<h^2,
$$
such that
$$
us\equiv v  \pmod p.
$$
We apply Lemma~\ref{lem:LinearCongr} (with $I=[-2h,2h]$, $J=[-2h^2,2h^2]$, $\gamma=X/(6h)$) and conclude that there are integers $a$ and $b$ satisfying conditions
\begin{equation}
\label{eq:cond_a,b} |a|\le 6h^2/X, \quad 0<|b|\le 6h/X, \quad s\equiv
a/b\pmod p.
\end{equation}
Now we multiply our original congruence
$$
(x_1+x_2-y_1-y_2)s+(x_1x_2-y_1y_2)\equiv 0\pmod p
$$
by $b$ and for $S=(x_1+x_2-y_1-y_2)a+(x_1x_2-y_1y_2)b$
we see that
$S\equiv 0\pmod p$.
Since $h^3/X<0.002 p$,  using~\eqref{eq:cond_a,b} we derive that $|S|<p$. Thus, $S=0$ and the congruence
is converted to an equality, giving
$$
(bx_1+a)(bx_2+a)=(by_1+a)(by_2+a),
$$
and the result follows from the bound on the divisor function.
\end{proof}

\begin{rem}  It is not difficult to show that the condition
$h^3/(\# \cX)<0.002p$ of Lemma~\ref{lem:GProdSet nu=2} can be
relaxed to $h^3/(\# \cX)<C_0p$, with any  constant $C_0>0$.
\end{rem}

The following result
is an extension of the well-known multiplicative energy estimate for pairs
of intervals, frequently needed in
character sum estimates, see~\cite[Theorem 3]{FrIw2}.
We use this result in the proof of Theorem~\ref{thm:9/20} below.

\begin{lemma}
\label{lem:prop1}
Let $A$ and $B$ be positive
integers with $AB\ll p$. Assume that $I$ is an interval consisting on $A$ consecutive integers, $\cY$ is a subset of an interval consisting on $B$ consecutive integers with $0\not\in \cY$. Then the number of solutions of the congruence
$$
x_1y_1\equiv x_2y_2\pmod p,\quad (x_1,x_2,y_1,y_2)\in I\times I\times \cY\times \cY
$$
is at most $(\# \cY)^2+A\#\cY p^{o(1)}$.
\end{lemma}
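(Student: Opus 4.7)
The plan is to split the solution count $N$ according to whether the congruence $x_1 y_1 \equiv x_2 y_2 \pmod p$ is actually an identity over the integers, $x_1 y_1 = x_2 y_2$, or reads $x_1 y_1 - x_2 y_2 = kp$ for some $k \neq 0$.

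For the integer-equality case, the classical divisor-function argument gives the second term of the bound. Every integer $n$ arising as $x_1 y_1$ satisfies $|n| \leq AB \ll p$, so $\tau(n) \leq n^{o(1)} \leq p^{o(1)}$. Writing $r(n) := \#\{(x,y) \in I \times \cY : xy = n\}$, we have $r(n) \leq \tau(n) \leq p^{o(1)}$, and therefore
\[
\sum_n r(n)^2 \leq p^{o(1)} \sum_n r(n) = p^{o(1)} \cdot A \#\cY,
\]
matching the $A \#\cY p^{o(1)}$ term.

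For the remaining case $x_1 y_1 - x_2 y_2 = kp$ with $k \neq 0$, I rewrite the congruence as $x_1 \equiv s x_2 \pmod p$, where $s \equiv y_2 y_1^{-1} \pmod p$ (well defined since $0 \notin \cY$ and $B \ll p$). Set $R(s) = \#\{(x_1, x_2) \in I \times I : x_1 \equiv s x_2 \pmod p\}$. Pairs with $y_1 = y_2$ force $x_1 = x_2$ and contribute $A \#\cY$, which is absorbed into the second term. For $y_1 \neq y_2$, Lemma~\ref{lem:LinearCongr} applied with $h = H = A$ shows that $R(s) \geq K$ forces $s \equiv a/b \pmod p$ with $|a|, b \leq A/(K-1)$, provided $K > 1 + 15 A^2/p$. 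I then dyadically decompose the sum over pairs $(y_1, y_2)$ according to the size of $R(y_2 y_1^{-1})$ and estimate each level.

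The main technical obstacle is controlling this dyadic sum. For levels $K \leq K_0 := \lceil 1 + 15 A^2/p \rceil$, where Lemma~\ref{lem:LinearCongr} is not yet effective, the trivial bound contributes at most $K_0 (\#\cY)^2 = (\#\cY)^2 + O\bigl(A^2 (\#\cY)^2 / p\bigr)$; the error term is $O(A \#\cY)$ thanks to $\#\cY \leq B$ and $AB \ll p$. For $K > K_0$, the constraint $s \equiv a/b \pmod p$ with $|a|, b \leq A/K$ allows at most $O((A/K)^2)$ values of $s$, and on the $\cY$-side the condition $AB \ll p$ lifts the congruence $b y_2 \equiv a y_1 \pmod p$ to the integer equation $b y_2 = a y_1$, parameterizing the $(y_1, y_2) \in \cY^2$ solutions by a single integer $t$ with $y_1 = bt$, $y_2 = at$. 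The telescoping sum over dyadic ranges is then absorbed into either the $(\#\cY)^2$ or the $A \#\cY p^{o(1)}$ term, completing the bound.
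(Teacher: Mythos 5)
Your proposal has genuine gaps, and the most fundamental one is a failure to account for the arbitrary location of the intervals. In the first step you claim that every integer $n$ arising as $x_1y_1$ satisfies $|n|\le AB\ll p$, but this is only true if $I\subseteq[1,A]$ and $\cY\subseteq[1,B]$; the lemma allows $I\subseteq[M,M+A]$ and $\cY\subseteq[N,N+B]$ with $M,N$ of size comparable to $p$, in which case $|x_1y_1|$ can be as large as $p^2$ and your integer-equality versus $k\ne 0$ dichotomy collapses. The paper handles this by a shift normalization: write $I=\{\xi+1,\ldots,\xi+A\}$, $\cY_0=\cY-s\subseteq[1,B]$, after which the congruence becomes $(\xi+x_1)(s+y_1)\equiv(\xi+x_2)(s+y_2)\pmod p$ and is \emph{not} of the form $x_1y_1\equiv x_2y_2$ over the integers anymore. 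The same objection defeats your claimed lift of $by_2\equiv ay_1\pmod p$ to the equation $by_2=ay_1$: with $y_1,y_2\in[N,N+B]$ and $N$ large, the quantity $by_2-ay_1$ is of size roughly $(|a|+b)N$, which has no reason to be smaller than $p$ even though $|a|,b\le A/K$.

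There is a second, independent gap even if one normalizes $\cY\subseteq[1,B]$. Your dyadic decomposition bounds the level-$K$ contribution by $2K$ times the number of pairs $(y_1,y_2)\in\cY^2$ whose ratio is $\equiv a/b$ with $|a|,b\le A/K$. For a fixed $(a,b)$, the parameterization $y_1=bt$, $y_2=at$ gives at most $\min(\#\cY,\,B/\max(|a|,b)+1)$ admissible $t$; summing over the $O((A/K)^2)$ pairs $(a,b)$ and then over dyadic $K$, the resulting bound is $O(ABp^{o(1)})$ rather than $O(A\#\cY p^{o(1)})$. These agree only when $\cY$ is essentially the whole interval; for sparse $\cY$ your bound is off by the factor $B/\#\cY$, which can be as large as $p/A$, and Theorem~\ref{thm:9/20} genuinely needs the $\#\cY$ dependence. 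The source of this loss is that the lattice-counting bound on the $(a,b)$ side is phrased in terms of the interval length $B$ rather than the cardinality $\#\cY$, and the dyadic scheme has no mechanism for recovering the saving.

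The paper's argument produces the $\#\cY$ dependence by a different route. After the shift normalization and a further reduction to a homogeneous congruence $x_1(s+y_1)\equiv x_2(s+y_2)\pmod p$ with $1\le|x_1|,|x_2|<A$ (the trivial solution $(0,0)$ for each pair $(y_1,y_2)$ accounts for the $(\#\cY)^2$ term), it applies the pigeonhole principle to fix a \emph{single} value $y_1$ capturing at least $N/(2\#\cY)$ of the remaining solutions. This pigeonhole step is where the $\#\cY$-saving is cashed in. For each such solution $(x_1,x_2,y_2)$ it then forms the pair $(u,v)=(x_1-x_2,\,x_2y_2-x_1y_1)$; the key observation is that $uZ-v=x_1(Z+y_1)-x_2(Z+y_2)$, so evaluating at $Z=-y_1$ gives $-uy_1-v=x_2(y_1-y_2)$, a nonzero integer of absolute value $\le 2AB\ll p$, determined by $(u,v)$ once $y_1$ is fixed. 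The divisor bound then shows the map $(x_1,x_2,y_2)\mapsto(u,v)$ is $p^{o(1)}$-to-one, and since every pair satisfies $us\equiv v\pmod p$ with $|u|\le 2A$ and $|v|\le 2AB\ll p$, each $u$ determines $v$ up to $O(1)$ choices, so there are $O(A)$ pairs in total. That yields $N\ll A\#\cY p^{o(1)}$. Your plan of invoking Lemma~\ref{lem:LinearCongr} is a reasonable instinct, but it is pointed at the wrong side of the congruence; the lemma is not used in the paper's proof of this statement at all.
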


\begin{proof}
We can assume that $A<0.1p$ and $B<0.1p$, as otherwise the result becomes trivial.

Assume $I=\{\xi+1,\xi+2,\ldots, \xi+A\}$, $\cY\subseteq \{s+1,s+2,\ldots, s+B\}$. Let $\cY_0=\cY-\{s\}\subseteq [1,B]$. We have to estimate the number of solutions of
\begin{equation}
\label{eqn:Prop2Eq}
(\xi+x_1)(s+y_1)\equiv (\xi+x_2)(s+y_2)\pmod p
\end{equation}
with $1\le x_1,x_2\le A$,  $y_1,y_2\in \cY_0$.
For a given pair $y_1,y_2$, the number of solutions of~\eqref{eqn:Prop2Eq} with $1\le x_1,x_2\le A$ is clearly bounded by the number of solutions of
the congruence
$$
x_1(s+y_1)\equiv x_2(s+y_2)\pmod p
$$
with $|x_1|,|x_2|\le A$.
Thus, the number of solutions of the congruence~\eqref{eqn:Prop2Eq} is bounded
by the number of solutions of the congruence
\begin{equation}
\label{eqn:Prop2Reduced}
x_1(s+y_1)\equiv x_2(s+y_2)\pmod p,\quad
1\le |x_1|,|x_2|<A,\ y_1,y_2\in \cY_0,
\end{equation}
augmented by $(\#\cY_0)^2=(\#\cY)^2$.

Let $N$ be the number of solutions of~\eqref{eqn:Prop2Reduced}.
We  assume that $N\ge2\# \cY$
since otherwise there is nothing to prove.
For an appropriately fixed $y_1\in \cY_0$ we obtain at least $N/\# \cY -1\ge N/(2\# \cY)$
solutions with $y_2\not=y_1$ (recall that $s+y_1\not \equiv 0\pmod p$
so $y_1 = y_2$ implies $x_1 = x_2$). If for each pair $(u,v)$ of the form
\begin{equation}
\label{eq:pair uv}
(u,v) = (x_1 - x_2, x_2y_2 - x_1y_1)
\end{equation}
we specify the polynomial
$$
R_{u,v}(Z)=uZ - v  = x_1(Z+y_1)-x_2(Z+y_2),
$$
then we have
$$
R_{u,v}(-y_1)\equiv x_2(y_1-y_2)\pmod p.
$$
Since $|x_2(y_1-y_2)| \le 2AB \ll p$,
we get at most $p^{o(1)}$ possibilities for $x_2$ and $y_2$ and  hence for
$(x_1,x_2,y_2)$  (recall that $y_1$ is fixed and $y_1\not\equiv y_2\pmod p$).
 Thus, when $(x_1,x_2,y_2)$ runs through the set of solutions, we get at least  $Np^{o(1)}/\# \cY$ distinct polynomials $R_{u,v}(Z)$. Note
that
$$R_{u,v}(s) = us-v\equiv 0\pmod p
$$
for each pair $(u,v)$ of the form~\eqref{eq:pair uv}. Therefore,
there are at least $Np^{o(1)}/\# \cY$
solutions $(u,v)\in \Z\times \Z$ of the congruence $us-v\equiv 0\pmod p$
with $|u|\le 2A$, $|v|\le 2AB$. On the other hand, for any $u$ there
are $O(1)$ values of $v$ satisfying $us-v\equiv 0\pmod p$ since $AB\ll p$.
Thus, $Np^{o(1)}/\# \cY\ll A$, and the desired result follows.
\end{proof}

The following result is used in estimating character sums with the divisor function.

\begin{lemma}
\label{lem:prop2}
For real $X$, $Y$ and $Z$ with
$$
X\ge 1, \qquad 2\le Z\le Y,\qquad X^2YZ<p,
$$
we consider the intervals $I=[1,X]$, $J=[1,Y]$ and denote by $\cP$
the set of the primes $z\in(Z/2,Z]$.
Then for $s\in\F_p^*$ the number of solutions of the congruence
\begin{equation}
\begin{split}
\label{eq:estxyz}
x_2z_2(s+x_1y_1)&\equiv  x_1z_1(s+x_2y_2)
\pmod p,\\
 x_1,x_2\in I,&\  y_1,y_2\in J,\
z_1,z_2\in \cP
\end{split}
\end{equation}
is at most $XYZp^{o(1)}$.
\end{lemma}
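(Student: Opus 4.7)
Proof plan.

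My plan is to adapt the polynomial-resultant strategy used in the proof of Lemma~\ref{lem:prop1}. First, I would expand the congruence \eqref{eq:estxyz} into the equivalent form
\[
s(x_2 z_2 - x_1 z_1) \equiv x_1 x_2 (y_2 z_1 - y_1 z_2) \pmod p,
\]
and set $u = x_2 z_2 - x_1 z_1$ and $v = x_1 x_2 (y_2 z_1 - y_1 z_2)$. Then $su \equiv v \pmod p$ with $|u| \le 2XZ$ and $|v| \le 2 X^2 Y Z$. Because the hypothesis $X^2 Y Z < p$ forces $|v| < 2p$, for each integer $u$ with $|u| \le 2XZ$ the congruence $v \equiv su \pmod p$ leaves only $O(1)$ integer values of $v$, and the number of admissible pairs $(u,v)$ is at most $O(X Z)$.

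Next, consider the polynomial $R_{u,v}(W) = uW - v$. It satisfies $R(s) \equiv 0 \pmod p$, and one has the key identity
\[
R(-x_1 y_1) \;=\; x_1 z_1 (x_1 y_1 - x_2 y_2),
\]
whose absolute value is at most $X^2YZ < p$. Thus, once $(x_1,y_1,z_1)$ and the polynomial $R_{u,v}$ are fixed, the integer $R(-x_1 y_1)$ — and hence the integer $x_2 y_2 = x_1 y_1 - R(-x_1 y_1)/(x_1 z_1)$ — is determined exactly. By the divisor bound this integer admits $p^{o(1)}$ factorizations into $(x_2,y_2)\in[1,X]\times[1,Y]$, after which $z_2$ is recovered from $z_2 = (u + x_1 z_1)/x_2$. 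The remaining freedom in $y_1$ is controlled via the linear Diophantine equation $z_1 y_2 - z_2 y_1 = v/(x_1 x_2)$: since distinct elements of $\cP$ are coprime primes, this equation has at most $O(Y/Z + 1)$ solutions in $(y_1,y_2)\in[1,Y]^2$ when $z_1 \ne z_2$, gaining a factor of $Z$ over the trivial $O(Y)$ estimate.

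Finally, to pay off the remaining factor of $X$ that a naive summation would lose, I would invoke Lemma~\ref{lem:prop1} on the auxiliary bilinear relation $x_1 z_1 \equiv x_2 z_2 - u \pmod p$. Taking the interval of length $XZ$ as $I$ and a suitable set of $B$-values (built from $s + x_iy_i$) as $\cY$, one sharpens the count of 4-tuples $(x_1,z_1,x_2,z_2)$ contributing to a given $u$, while the coprime case $z_1 \ne z_2$ is handled by the bound $O(X/Z + 1)$ on the number of $(x_1,x_2)\in[1,X]^2$ solutions to $z_2 x_2 - z_1 x_1 = u$. The diagonal $z_1 = z_2$ case must be treated separately: dividing through by the common prime reduces the congruence to $x_2(s+x_1y_1) \equiv x_1(s+x_2y_2) \pmod p$, which is then bounded by Lemma~\ref{lem:prop1}.

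The main obstacle I expect is the delicate bookkeeping: each of pigeon-hole, the divisor bound, the coprimality saving, and the application of Lemma~\ref{lem:prop1} contributes a factor that individually leaves a loose bound such as $X^2YZ \, p^{o(1)}$, and only by combining all four savings simultaneously — crucially, using that distinct $z_i$ are coprime primes to cut the number of $(y_1,y_2)$ by $Z$ — does one reach the sharp bound $XYZ\, p^{o(1)}$.
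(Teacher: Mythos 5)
Your opening move — expanding~\eqref{eq:estxyz} to $s(x_2z_2-x_1z_1)\equiv x_1x_2(y_2z_1-y_1z_2)\pmod p$ and noting that for $u\neq0$ the value $v$ is pinned down to $O(1)$ integers by $|v|\le X^2YZ<p$ — is exactly the right starting point, and the identity $R_{u,v}(-x_1y_1)=x_1z_1(x_1y_1-x_2y_2)$ is a correct and useful observation (it is algebraically equivalent to the identity the paper uses). However, the counting does not close. As you set it up, you fix $(u,v)$ \emph{and} the triple $(x_1,y_1,z_1)$ before applying the divisor bound to recover $(x_2,y_2,z_2)$; the triple alone already has $XYZ$ possibilities and $(u,v)$ another $O(XZ)$, so the naive total is $X^2YZ^2\,p^{o(1)}$, off by a factor $XZ$. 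Your proposed remedies do not repair this: the bound $O(Y/Z+1)$ for $(y_1,y_2)$ only recovers a factor $Z$, and even combined with $O(X/Z+1)$ for $(x_1,x_2)$ you are still left with $O(XZ\cdot Z^2\cdot(X/Z+1)\cdot(Y/Z+1))\gg X^2YZ$ in the regime $X\ge Z$. What is missing is a mechanism to cut $(x_1,x_2)$ to $p^{o(1)}$ choices simultaneously with the other savings (e.g.\ via $x_1x_2\mid v$, which you never invoke).

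The two appeals to Lemma~\ref{lem:prop1} are also incorrect as stated. The relation $x_1z_1=x_2z_2-u$ is an exact equation over $\Z$, not a congruence $ab\equiv cd\pmod p$ of the form Lemma~\ref{lem:prop1} handles; and for the $z_1=z_2$ subcase, the congruence $x_2(s+x_1y_1)\equiv x_1(s+x_2y_2)\pmod p$ does not fit Lemma~\ref{lem:prop1} either, because the second factor on each side depends on the same index as the first factor (the lemma requires $x_i\in I$ and $y_i\in\cY$ to range independently). The paper proceeds quite differently: it first disposes of the ``diagonal'' case $s+x_1y_1\equiv s+x_2y_2\pmod p$, where $x_1y_1=x_2y_2$ as integers below $p$, handled by the divisor bound plus the fact that primes in $\cP$ are below $\sqrt p$, contributing $O(XYZ+XYp^{o(1)}+Z^2p^{o(1)})$; then in the off-diagonal case it pigeon-holes the remaining $N$ solutions onto a single pair $(n_0,m_0)=(x_1z_1-x_2z_2,\,y_1)$ among $\le 2XYZ$ possibilities, and for that fixed pair it recovers the full solution in $p^{o(1)}$ ways by a retrieval chain: $x_1x_2(y_1z_2-y_2z_1)$ is a nonzero integer of absolute value $<p$ congruent to $sn_0$, hence determined up to $O(1)$; its factorization recovers $x_1,x_2$ and $y_1z_2-y_2z_1$; the exact identity $n_0m_0+(y_1z_2-y_2z_1)x_2=z_1(x_1y_1-x_2y_2)$ then recovers $z_1$, after which $z_2$ and $y_2$ follow. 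The crucial step you are missing is pigeon-holing on $y_1$ as well as on $u$: fixing both coordinates is what makes the divisor-bound retrieval chain terminate with $p^{o(1)}$ solutions per class.
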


\begin{proof} First we consider the case $s+x_1y_1\equiv s+x_2y_2\pmod p$. Then
$x_1y_1 = x_2y_2$ since $1\le x_1y_1, x_2y_2\le XY<p$.

If, moreover,
$s+x_1y_1\equiv s+x_2y_2\equiv 0\pmod p$ then
the number $x_1y_1=x_2y_2$ is uniquely defined and so there are $p^{o(1)}$ possibilities
for  each of $ x_1,x_2,y_1,y_2$. Hence, the number of such solutions
is at most $Z^2p^{o(1)}$.

Assume now that
\begin{equation}
\label{eq:case equiv}
s+x_1y_1\equiv s+x_2y_2\not\equiv 0\pmod p.
\end{equation}
If also $x_1 = x_2$ then $y_1 = y_2$, $z_1 = z_2$, and we get $XY\#\cP\le XYZ$
solutions. If $x_1 \ne  x_2$, we specify the common value $u = x_1y_1 = x_2y_2$.
Then $ x_1,x_2,y_1,y_2$ are divisors of $u$ and so there are $p^{o(1)}$ possibilities
for  each of them.  For fixed $ x_1,x_2,y_1,y_2$
the ratio $z_1/z_2$ is uniquely defined modulo $p$ and $z_1/z_2\not\equiv1\pmod p$
(since  we have $x_1 \not \equiv  x_2\pmod p$ but $s+x_1y_1\equiv s+x_2y_2\pmod p$).
Therefore $z_1,z_2$ are now uniquely defined too, as they are primes not
exceeding $Z<\sqrt p$. Thus, the number of solutions
to~\eqref{eq:estxyz} satisfying~\eqref{eq:case equiv} is at most
$XYZ+XYp^{o(1)}$.

We now consider the case $s+x_1y_1\not \equiv s+x_2y_2\pmod p$.
Let $N$ be the number of solutions of~\eqref{eq:estxyz} with $s+x_1y_1\not \equiv s+x_2y_2\pmod p$.
In particular, this condition implies that $x_1z_1\not=x_2z_2$ and $x_1y_1 \ne  x_2y_2$.
We can assume that $N>2XYZ$, as otherwise there is nothing to prove. There exist integers $n_0, m_0$ with $1\le |n_0|< XZ, 1\le m_0\le Y$ such that we have at least $N/(2XYZ)$ solutions with $x_1z_1-x_2z_2=n_0, y_1=m_0$. From~\eqref{eq:estxyz} we see that
$$
x_1x_2(y_1z_2-y_2z_1)\equiv sn_0 \pmod p.
$$
Thus, the number $x_1x_2(y_1z_2-y_2z_1)$ is nonzero and well
defined modulo $p$. Since its absolute value does not exceed
$X^2YZ<p$, it  may take at most two different integer values.
Thus, we can retrieve $x_1,x_2$ and $y_1z_2-y_2z_1$ with
$p^{o(1)}$ possibilities. Once these numbers are retrieved, we use
the equality
$$
n_0m_0+(y_1z_2-y_2z_1)x_2=z_1(x_1y_1-x_2y_2)
$$
and retrieve $z_1$ with $p^{o(1)}$ possibilities. Consequently,
from $n_0=x_1z_1-x_2z_2$ we retrieve $z_2$, and then we retrieve
$y_2$ from~\eqref{eq:estxyz}.

Thus, $N/(2XYZ)\le p^{o(1)}$ and the result follows.
\end{proof}

\section{Background on Algebraic Integers}

Let $\K$ be a finite extension of $\Q$ and let $\ZK$ be the ring of
integers in $\K$. We recall that the logarithmic height of an
algebraic number $\alpha$ is defined as the logarithmic height
$H(P)$ of its minimal polynomial $P$, that is, the maximum logarithm
of the largest (by absolute value) coefficient of $P$.

We  need a bound of Chang~\cite[Proposition~2.5]{Chang0} on the
divisor function in algebraic number fields.

\begin{lemma}
\label{lem:Div ANF} Let $\K$ be a finite extension of $\Q$ of degree
$d = [\K:\Q]$. For any algebraic integer $\gamma\in \Z_K$ of
logarithmic height at most $H\ge 2$, the number of   pairs
$(\gamma_1, \gamma_2)$ of  algebraic integers $\gamma_1,\gamma_2\in
\Z_K$ of logarithmic  height at most $H$ with
$\gamma=\gamma_1\gamma_2$ is at most $\exp\(O(H/\log H)\)$, where
the implied constant depends on $d$.
\end{lemma}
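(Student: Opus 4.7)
The plan is to reduce the question to a bound on the ideal divisor function of $(\gamma)$ in $\Z_\K$, and then handle the ambiguity coming from units. First I would observe that a factorization $\gamma = \gamma_1 \gamma_2$ in $\Z_\K$ induces an ideal factorization $(\gamma) = (\gamma_1)(\gamma_2)$ in the Dedekind domain $\Z_\K$; thus an ordered element factorization determines an ordered factorization of $(\gamma)$ into two principal ideal divisors, together with a choice of generator of each factor. The hypothesis $H(\gamma)\le H$ implies that the minimal polynomial of $\gamma$ has all coefficients at most $e^H$ in absolute value, so each Galois conjugate of $\gamma$ satisfies $|\gamma^{(i)}|\le e^{O(H)}$ (with constant depending on $d$), and hence $|\Nm_{\K/\Q}(\gamma)|\le e^{O(H)}$. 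The same estimate applies to $\gamma_1,\gamma_2$.

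Next I would bound the ideal divisor function $\tau_\K((\gamma))$, namely the number of integral ideal divisors of $(\gamma)$, by $\exp(O(\log N(\gamma)/\log\log N(\gamma))) = \exp(O(H/\log H))$. This is the Wigert-type analogue over $\Z_\K$ of the classical bound $d(n)\le\exp(O(\log n/\log\log n))$: the function $\tau_\K$ is multiplicative, $\tau_\K(\pf^k)=k+1$ on prime powers, and the usual extremal analysis combined with Landau's prime ideal theorem (or just a crude upper bound on the number of prime ideals of bounded norm, which differs by a factor of at most $d$ from the rational case) gives precisely the same asymptotic. This step is the main content of the lemma, and its proof is entirely parallel to the $\Z$ case.

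The remaining subtlety is that each ideal factorization $(\gamma)=\aaf\bbf$ can come from several element factorizations: if $\gamma_1$ and $\gamma_1'$ both generate $\aaf$, then $\gamma_1'=u\gamma_1$ for some unit $u\in\Z_\K^\times$, and correspondingly $\gamma_2'=u^{-1}\gamma_2$. By Dirichlet's unit theorem, $\Z_\K^\times$ is the product of a finite torsion group with a free group of rank $r\le d-1$. The constraint $H(u\gamma_1),H(u^{-1}\gamma_2)\le H$ forces the logarithmic embedding of $u$ into a box of side $O(H)$ in the rank-$r$ unit lattice, so the number of admissible $u$ is $O(H^{d-1})$. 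Multiplying the ideal-divisor count by this polynomial factor gives $\exp(O(H/\log H))\cdot H^{O(1)} = \exp(O(H/\log H))$, as required.

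The main obstacle is keeping the Wigert-type constant and the unit contribution sharp enough to absorb into $\exp(O(H/\log H))$; since the unit count is merely polynomial in $H$, this is comfortable, but a clean implementation requires care in converting the ``logarithmic height $\le H$'' condition to an archimedean bound on all conjugates, and then to a lattice-point estimate in the unit lattice. Once these routine but slightly delicate bookkeeping steps are done, the statement follows.
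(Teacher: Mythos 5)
The paper gives no proof of this lemma: it is quoted verbatim from Chang~\cite[Proposition~2.5]{Chang0} and used as a black box, so there is no in-paper argument to compare against. Your reconstruction is sound and is the natural route: pass to the ideal $(\gamma)$ in $\ZK$, bound the number of ordered ideal factorizations $(\gamma)=\aaf\bbf$ by a Wigert-type estimate $\tau_\K(\aaf)\le\exp\(O_d(\log \Nm\aaf/\log\log \Nm\aaf)\)$ (using that the count of prime ideals of norm at most $x$ is $O_d(x/\log x)$), then absorb the unit ambiguity. Two of the ``bookkeeping'' points you flag do need to be made explicit, though both are standard. First, to pass from $H(u\gamma_1)\le H$ and $H(u^{-1}\gamma_2)\le H$ to the two-sided bound $|\log|u^{(i)}||=O_d(H)$ at each archimedean place, you need lower bounds on the conjugates $|\gamma_1^{(i)}|,|\gamma_2^{(i)}|$; these follow from $|\Nm\gamma_j|\ge1$ together with the upper bound $e^{H+O(1)}$ on all the other conjugates, giving $|\gamma_j^{(i)}|\ge e^{-(d-1)(H+O(1))}$. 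Second, the lattice-point count $O_d(H^{d-1})$ in the logarithmic unit lattice requires that the successive minima of that lattice be bounded below by a constant depending only on $d$ (equivalently, a unit with all conjugates in $[e^{-c},e^{c}]$ for sufficiently small $c=c(d)$ is a root of unity, by a Kronecker/compactness argument), and that the torsion subgroup has size $O_d(1)$. With these in place your argument yields $\tau_\K((\gamma))\cdot O_d(H^{d-1})=\exp\(O_d(H/\log H)\)$, which is the claim.
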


Now recall that the Mahler measure of a nonzero polynomial
$$P(Z) = a_dZ^d +\ldots+ a_1Z + a_0
= a_d\prod_{j=1}^d(Z - \xi_j) \in\C[Z]
$$
is defined as
$$M(P)= |a_d|\prod_{j=1}^d \max\{1,|\xi_j|\},$$
see~\cite[Chapter~3, Section~3]{Mig}

We recall the following estimates, that follows immediately from a
much more general~\cite[Theorem~4.4]{Mig}:

\begin{lemma}
\label{lem:HeighMahl} For any nonzero polynomial $P$ of degree $d$
the following inequality holds
$$2^{-d}e^{H(P)} \le M(P)\le (d+1)^{1/2} e^{H(P)}.$$
\end{lemma}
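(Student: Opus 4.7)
The plan is to establish the upper and lower bounds separately; both are classical and independent, and together they form Theorem~4.4 of~\cite{Mig}. First I would note the translation between the notation in the paper and the more standard one: by the definition of $H(P)$ adopted here, $e^{H(P)} = \|P\|_\infty := \max_i |a_i|$, so the claim reduces to showing $2^{-d}\|P\|_\infty \le M(P) \le \sqrt{d+1}\,\|P\|_\infty$.

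For the upper bound, my first step would be to prove Landau's inequality $M(P)\le \|P\|_2$, where $\|P\|_2 = (\sum_{i=0}^d|a_i|^2)^{1/2}$. This follows from Jensen's formula
\[
\log M(P) = \int_0^1 \log\bigl|P(e^{2\pi i t})\bigr|\,dt,
\]
to which one applies the concavity of the logarithm together with Parseval's identity $\int_0^1 |P(e^{2\pi i t})|^2\,dt = \sum_{i=0}^d |a_i|^2$. The second step is the trivial inequality $\|P\|_2\le\sqrt{d+1}\,\|P\|_\infty$, which then yields $M(P)\le (d+1)^{1/2} e^{H(P)}$.

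For the lower bound $2^{-d}e^{H(P)}\le M(P)$, I would factor $P(Z) = a_d\prod_{j=1}^d(Z-\xi_j)$ and use Vieta's formulas to express each coefficient as
\[
a_k = (-1)^{d-k}a_d\, e_{d-k}(\xi_1,\ldots,\xi_d),
\]
where $e_j$ denotes the $j$-th elementary symmetric polynomial. The term-by-term estimate $|e_{d-k}(\xi_1,\ldots,\xi_d)|\le \binom{d}{d-k}\prod_{j=1}^d\max(1,|\xi_j|)$ then gives
\[
|a_k|\le |a_d|\binom{d}{d-k}\prod_{j=1}^d\max(1,|\xi_j|) = \binom{d}{d-k}M(P)\le 2^d M(P).
\]
Maximizing over $k$ yields $\|P\|_\infty\le 2^d M(P)$, which rearranges to the claim.

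The only genuinely analytic ingredient is Jensen's formula used to derive Landau's inequality; everything else reduces to Vieta's formulas, Parseval's identity, and the crude binomial bound $\binom{d}{k}\le 2^d$. I therefore do not expect any real obstacle — the main care is simply to keep the logarithmic versus multiplicative versions of ``height'' straight when translating between $e^{H(P)}$ and $\|P\|_\infty$.
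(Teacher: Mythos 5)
Your proof is correct. The paper itself gives no argument for this lemma and simply cites~\cite[Theorem~4.4]{Mig}; what you wrote out is precisely the standard derivation that underlies that reference: Landau's inequality $M(P)\le\|P\|_2$ via Jensen's formula, concavity of $\log$, and Parseval, combined with the trivial bound $\|P\|_2\le\sqrt{d+1}\,\|P\|_\infty$ for the upper estimate, and the Vieta/elementary-symmetric-polynomial bound $|a_k|\le\binom{d}{d-k}M(P)\le 2^d M(P)$ for the lower estimate. Your translation $e^{H(P)}=\|P\|_\infty$ matches the paper's definition of logarithmic height, and all the steps check out, so there is nothing to correct.
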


\begin{cor}
\label{cor:twoPol} For any nonzero polynomials $Q_1,Q_2\in\C[Z]$ we
have
$$
H(Q_1Q_2)= H(Q_1) + H(Q_2) +O(1),
$$
where the implied constant depends only on $\deg Q_1$ and $\deg
Q_2$.
\end{cor}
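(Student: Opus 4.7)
The plan is to deduce Corollary~\ref{cor:twoPol} from Lemma~\ref{lem:HeighMahl} together with the multiplicativity of the Mahler measure. Recall that for any nonzero $P_1,P_2\in\C[Z]$ one has the identity $M(P_1P_2)=M(P_1)M(P_2)$, which is immediate from the product formula
$$M(P)=|a_d|\prod_{j=1}^d\max\{1,|\xi_j|\}$$
defining the Mahler measure in terms of leading coefficient and roots.

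Set $d_i=\deg Q_i$ for $i=1,2$, so that $\deg(Q_1Q_2)=d_1+d_2$. I would apply Lemma~\ref{lem:HeighMahl} to $Q_1$, $Q_2$, and $Q_1Q_2$ to obtain the four inequalities
$$M(Q_i)\le (d_i+1)^{1/2}e^{H(Q_i)},\qquad 2^{-d_i}e^{H(Q_i)}\le M(Q_i),$$
and similarly for $Q_1Q_2$ with exponent $d_1+d_2$. Combining the upper estimate for $M(Q_1)M(Q_2)$ with the lower estimate for $M(Q_1Q_2)$ through the multiplicativity identity yields
$$2^{-(d_1+d_2)}e^{H(Q_1Q_2)}\le M(Q_1Q_2)=M(Q_1)M(Q_2)\le (d_1+1)^{1/2}(d_2+1)^{1/2}e^{H(Q_1)+H(Q_2)},$$
so that after taking logarithms
$$H(Q_1Q_2)\le H(Q_1)+H(Q_2)+(d_1+d_2)\log 2+\tfrac12\log((d_1+1)(d_2+1)).$$
The reverse direction goes the same way: using the lower bounds on $M(Q_i)$ together with the upper bound on $M(Q_1Q_2)$ gives
$$(d_1+d_2+1)^{1/2}e^{H(Q_1Q_2)}\ge M(Q_1)M(Q_2)\ge 2^{-(d_1+d_2)}e^{H(Q_1)+H(Q_2)},$$
hence
$$H(Q_1Q_2)\ge H(Q_1)+H(Q_2)-(d_1+d_2)\log 2-\tfrac12\log(d_1+d_2+1).$$

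Combining the two estimates gives $H(Q_1Q_2)=H(Q_1)+H(Q_2)+O(1)$, with an implied constant depending only on $d_1$ and $d_2$ as required. There is no real obstacle here: the entire argument rests on a single step, namely switching between $H$ and $M$ via Lemma~\ref{lem:HeighMahl} to exploit the multiplicativity of $M$ which $H$ itself does not enjoy. The only point meriting care is bookkeeping that the degree-dependent constants from both sides of Lemma~\ref{lem:HeighMahl} are consumed only once, so that the final error term depends on $d_1,d_2$ but not on the coefficients of $Q_1,Q_2$.
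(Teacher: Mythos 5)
Your proof is correct and is exactly the argument the paper has in mind: pass from logarithmic height to Mahler measure via Lemma~\ref{lem:HeighMahl}, exploit the multiplicativity $M(Q_1Q_2)=M(Q_1)M(Q_2)$, and pass back, with all error terms depending only on the degrees. The paper treats the corollary as an immediate consequence and does not spell out the two-sided bookkeeping, but what you wrote is precisely the intended derivation.
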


\begin{lemma}
\label{lem:PolCoef} For any positive integer $\nu$ there is a
constant $C$ such that the following holds. If $P_1, P_2\in\Z[Z]$,
$P=P_1P_2$,
$$P(Z)=\sum_{j=0}^\nu u_j Z^{\nu-j}$$
and for some $A>0$ and $h>0$ the coefficients of the polynomial $P$
satisfy the inequalities
$$u_0\neq0,\qquad |u_j|\le Ah^j,\qquad j=0,\ldots,\nu,$$
then the polynomial $P_1$ has the form
$$P_1(Z)=\sum_{j=0}^\mu v_j Z^{\mu-j}$$
with
$$v_0\neq0,\quad|v_j|\le CAh^j\quad(j=0,\ldots,\mu).$$
\end{lemma}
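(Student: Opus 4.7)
The plan is to remove the weighting in the hypothesis $|u_j|\le Ah^j$ by the substitution $Z\mapsto hZ$, after which Corollary~\ref{cor:twoPol} controls the coefficients of the factor $P_1$. Write $\mu=\deg P_1$ and $\lambda=\nu-\mu=\deg P_2$, and put $Q(Z)=P(hZ)$, $Q_i(Z)=P_i(hZ)$, so that $Q=Q_1Q_2$. Then
$$Q(Z)=\sum_{j=0}^\nu u_jh^{\nu-j}Z^{\nu-j},\qquad Q_1(Z)=\sum_{j=0}^\mu v_jh^{\mu-j}Z^{\mu-j},$$
and every coefficient of $Q$ has absolute value at most $|u_j|h^{\nu-j}\le Ah^\nu$. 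In particular $H(Q)\le\log(Ah^\nu)$.

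Next I would bound $H(Q_2)$ from below using integrality. Since $P_2\in\Z[Z]$ has nonzero leading coefficient, that coefficient has absolute value at least $1$, so the leading coefficient of $Q_2$ has absolute value at least $h^\lambda$; hence $H(Q_2)\ge\lambda\log h$.

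Now Corollary~\ref{cor:twoPol} gives $H(Q_1)+H(Q_2)=H(Q)+O(1)$ with the implied constant depending only on $\nu$. Combining the two bounds yields
$$H(Q_1)\le\log(Ah^\nu)-\lambda\log h+O(1)=\mu\log h+\log A+O(1),$$
so every coefficient of $Q_1$ is at most $CAh^\mu$ for some $C=C(\nu)$. Since the coefficient of $Z^{\mu-j}$ in $Q_1$ equals $v_jh^{\mu-j}$, this rearranges to the desired $|v_j|\le CAh^j$. Finally $v_0\ne0$ is automatic, as it is the leading coefficient of the degree-$\mu$ polynomial $P_1$.

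The only real conceptual hurdle is recognising why the rescaling is indispensable. Applied to $P=P_1P_2$ directly, Corollary~\ref{cor:twoPol} would merely give $|v_j|\le C\max_k|u_k|\le CAh^\nu$, which loses all $j$-dependence; the substitution $Z\mapsto hZ$ trades the graded hypothesis $|u_j|\le Ah^j$ for the uniform bound $|u_jh^{\nu-j}|\le Ah^\nu$, and it is exactly this flat bound that the Mahler-measure machinery behind Corollary~\ref{cor:twoPol} can see.
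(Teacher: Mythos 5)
Your proof is correct and is essentially identical to the paper's argument: both rescale by $Z\mapsto hZ$, bound $H(Q)$ above by $\log(Ah^\nu)$, bound $H(Q_2)$ below by $(\nu-\mu)\log h$ via integrality of the leading coefficient, and then apply Corollary~\ref{cor:twoPol} to isolate $H(Q_1)\le\mu\log h+\log A+O(1)$, which unwinds to the stated coefficient bound.
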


\begin{proof} We construct the polynomials
$$Q(Z)=P(hZ),\quad Q_1(Z)=P_1(hZ), \quad Q_2(Z)=P_2(hZ).$$
We have
$$e^{H(Q)}\le Ah^\nu.$$
Moreover,
$$e^{H(Q_2)}\ge h^{\nu-\mu}$$
since the leading coefficient of $Q_2$ is at least $h^{\nu-\mu}$.
Therefore, by Corollary~\ref{cor:twoPol} we get
$$e^{H(Q_1)}\ll Ah^\mu,$$
and the result follows.
\end{proof}

A particular case of Lemma~\ref{lem:PolCoef} is the following
statement (see, for example,~\cite[Theorem~6.32]{vzGG}).

\begin{lemma}
\label{lem:HeighDiv} Let $P,Q \in \Z[Z]$ be two univariate non-zero
polynomials with $Q\mid P$. If $P$ is of logarithmic height at most
$H\ge1$ then $Q$ is of  logarithmic height at most $H + O(1)$, where
the implied constant depends only on $\deg P$.
\end{lemma}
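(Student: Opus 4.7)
The plan is to deduce the result directly from Lemma~\ref{lem:PolCoef} by a judicious choice of parameters. Since $Q \mid P$ in $\Z[Z]$, I will write $P = Q R$ with $R \in \Z[Z]\setminus\{0\}$ and apply Lemma~\ref{lem:PolCoef} with $P_1 = Q$, $P_2 = R$, and $\nu = \deg P$.

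The key observation is that the hypothesis $H(P) \le H$ just says $|u_j| \le e^H$ for every coefficient of $P$, and these uniform bounds fit into the template $|u_j| \le A h^j$ precisely when one takes $h = 1$ and $A = e^H$. With this choice all hypotheses of Lemma~\ref{lem:PolCoef} are satisfied: the requirement $u_0 \ne 0$ is automatic once one writes $P(Z) = \sum_{j=0}^{\nu} u_j Z^{\nu - j}$ with $\nu = \deg P$, since $u_0$ is then the leading coefficient of a nonzero polynomial. The conclusion of the lemma will then give $|v_j| \le C A h^j = C e^H$ for all coefficients $v_j$ of $Q$, so that $H(Q) \le H + \log C = H + O(1)$, with $C$ depending only on $\nu = \deg P$.

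I do not foresee any serious obstacle here; the content is essentially recognizing that uniform coefficient bounds correspond to the boundary case $h = 1$ of the preceding lemma. As a consistency check, the same bound could be obtained directly from the Mahler-measure estimates of Lemma~\ref{lem:HeighMahl}: the multiplicativity $M(P) = M(Q)M(R)$ combined with $M(R) \ge 1$ (which holds for every nonzero element of $\Z[Z]$) yields $M(Q) \le M(P)$, whence $e^{H(Q)} \le 2^{\deg Q} M(Q) \le 2^{\deg Q} (\deg P + 1)^{1/2} e^H = e^{H + O(1)}$. The route through Lemma~\ref{lem:PolCoef} is the one suggested by the text and is the most direct.
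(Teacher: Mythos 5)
Your proposal is correct and takes exactly the approach the paper intends: the paper simply remarks that Lemma~\ref{lem:HeighDiv} is ``a particular case of Lemma~\ref{lem:PolCoef},'' and you have correctly identified that the specialization is $h=1$, $A=e^H$, $P_1=Q$, $P_2=R$, which turns the hypothesis $|u_j|\le Ah^j$ into the uniform bound $|u_j|\le e^H$ and the conclusion $|v_j|\le CAh^j$ into $|v_j|\le Ce^H$, i.e.\ $H(Q)\le H+\log C=H+O(1)$ with $C$ depending only on $\deg P$. The alternative Mahler-measure argument you include as a sanity check (using $M(P)=M(Q)M(R)$, $M(R)\ge1$ for nonzero $R\in\Z[Z]$, and Lemma~\ref{lem:HeighMahl}) is also valid and is essentially the content of the cited reference; both routes are one-liners given the preceding lemmas.
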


\section{Background on Geometry of Numbers}

Recall that a lattice in $\R^n$ is an additive subgroup of $\R^n$
generated by $n$ linearly independent vectors. Take an arbitrary
convex compact and symmetric with respect to $0$ body
$D\subseteq\R^n$. Recall that, for a lattice in $\Gamma\subseteq\R^n$
and $i=1,\ldots,n$, the $i$th successive minimum
$\lambda_i(D,\Gamma)$
of the set $D$ with respect to the lattice $\Gamma$ is defined as
the minimal number $\lambda$ such that the set $\lambda D$ contains
$i$ linearly independent vectors of the lattice $\Gamma$. Obviously,
$\lambda_1(D,\Gamma)\le\ldots\le\lambda_n(D,\Gamma)$. We need the
following result given in~\cite[Proposition~2.1]{BHW} (see
also~\cite[Exercise~3.5.6]{TaoVu} for a simplified form that is
still enough for our purposes).

\begin{lemma}
\label{lem:latp} We have,
$$\#(D\cap\Gamma)\le \prod_{i=1}^n \(\frac{2i}{\lambda_i(D,\Gamma)} + 1\).
$$
\end{lemma}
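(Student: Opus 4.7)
My plan is to follow the classical strategy, going back to Minkowski and refined by Betke--Henk--Wills, of reducing the lattice point count to a coordinate count in a basis adapted to the successive minima. By the definition of $\lambda_i=\lambda_i(D,\Gamma)$, first select linearly independent vectors $b_1,\ldots,b_n\in\Gamma$ with $b_i\in\lambda_i D$; this is possible by the very definition of the successive minima. These vectors form a basis of $\R^n$ (though in general only of a sublattice of $\Gamma$ of finite index). Every $x\in D\cap\Gamma$ admits a unique real expansion $x=\sum_{i=1}^n\alpha_i(x)b_i$, and the map $x\mapsto (\alpha_1(x),\ldots,\alpha_n(x))$ is injective; so it suffices to bound the number of values each individual $\alpha_i$ can take.

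For the $i$-th coordinate, I would pass to the projection $\pi_i\colon \R^n\to\R^n/\mathrm{span}(b_1,\ldots,b_{i-1})$. Under $\pi_i$, the body $D$ maps to a centrally symmetric convex body and $\Gamma$ maps to a lattice $\pi_i(\Gamma)$ of rank $n-i+1$; the value $\alpha_i(x)$ is recovered as the coefficient of $\pi_i(b_i)$ in $\pi_i(x)$. A careful comparison exploiting the minimality of $\lambda_i$---precisely, the fact that any lattice vector in $\mathrm{span}(b_1,\ldots,b_i)$ that is linearly independent of $b_1,\ldots,b_{i-1}$ lies in $\lambda D$ only if $\lambda\ge\lambda_i$, together with a control of the index $[\Gamma\cap\mathrm{span}(b_1,\ldots,b_i):\sum_{j\le i}\Z b_j]$, which is at most $i$ when the $b_j$ are chosen greedily---then yields that $\alpha_i$ takes at most $2i/\lambda_i+1$ distinct values. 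Multiplying over $i=1,\ldots,n$ and invoking the injectivity of the coordinate map gives the asserted bound.

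The main technical obstacle is justifying the factor $i$ rather than $1$ in the quantity $2i/\lambda_i+1$. The greedy basis $b_1,\ldots,b_n$ need not generate $\Gamma$, and the cumulative index $[\Gamma:\sum \Z b_i]$ can be as large as $n!$; tracking this contribution layer by layer is precisely what produces the factor $i$ at stage $i$. An equivalent but technically cleaner route would be to first invoke Mahler's adapted-basis theorem to replace $b_1,\ldots,b_n$ by an actual basis $c_1,\ldots,c_n$ of $\Gamma$ with $c_i\in i\lambda_i D$, and then bound $|m_i|\le i/\lambda_i$ in the integer expansion $x=\sum m_ic_i$ directly, using the support function of $D$ in the direction dual to $c_i$. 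Either route gives the same final estimate.
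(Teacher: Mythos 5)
The paper does not prove this lemma at all: it is quoted verbatim from Betke--Henk--Wills~\cite[Proposition~2.1]{BHW}, with a pointer to~\cite[Exercise~3.5.6]{TaoVu} for a weaker variant that would also suffice. So there is no in-paper argument to compare against; I can only judge your sketch on its own terms, and there it has genuine gaps at precisely the step you flag as ``the main technical obstacle.''

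Your overall plan (greedy minima-achieving vectors $b_1,\ldots,b_n$, real coordinates $\alpha_i$, bound the number of values of each $\alpha_i$, multiply) is the right template, and the reduction to a per-coordinate count is sound since the coordinate map is injective. But both routes you propose for the per-coordinate bound are asserted rather than established, and neither works as stated. In the first route, the bound ``$\alpha_i$ takes at most $2i/\lambda_i+1$ values'' is essentially the lemma restated for one coordinate; the two ingredients you invoke do not obviously yield it. The index bound $[\Gamma\cap\mathrm{span}(b_1,\ldots,b_i):\sum_{j\le i}\Z b_j]\le i$ is not a standard consequence of choosing the $b_j$ greedily, and I do not see why it should hold (what is classical is that the cumulative index divides $n!$, but that does not factor layer-by-layer as $\prod i$ in the required way without a separate argument). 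More seriously, even granting the index control, bounding the number of values of $\alpha_i$ requires an \emph{upper} bound on $\sup_{x\in D}|\alpha_i(x)|$, i.e.\ an upper bound on the dual norm of the coordinate functional $\phi_i$. The information $b_j\in\lambda_j D$ gives only a lower bound $\sup_{x\in D}|\phi_i(x)|\ge 1/\lambda_i$; the width of $D$ in the $\phi_i$-direction can be far larger than $i/\lambda_i$ when the $b_j$ are close to $D$-parallel, so no coordinate bound follows from the data you cite. The same objection applies verbatim to your Mahler-basis route: $c_i\in i\lambda_i D$ gives $|\phi_i(c_i)|=1$ and $\|c_i\|_D\le i\lambda_i$, hence again only the unhelpful lower bound $\|\phi_i\|_{D^*}\ge1/(i\lambda_i)$, not the upper bound $\|\phi_i\|_{D^*}\le i/\lambda_i$ you would need. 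A rounding argument (replace the minimizing real combination of $b_1,\ldots,b_{i-1}$ by a nearby lattice point of $\Gamma\cap\mathrm{span}(b_1,\ldots,b_{i-1})$ and invoke the definition of $\lambda_i$) only yields a nontrivial lower bound on the transversal width for $i\le 2$; for $i\ge3$ the covering-radius error term $\tfrac12\sum_{j<i}\lambda_j$ already swamps $\lambda_i$. The actual Betke--Henk--Wills argument is a more delicate inductive slicing, and is not recoverable from the per-coordinate heuristics in your sketch without a genuinely new idea to control the widths of $D$ transverse to the flag $\mathrm{span}(b_1,\ldots,b_{i-1})$. I would either reproduce the BHW induction faithfully, or prove the weaker Tao--Vu form (which has $2n/\lambda_i$ in place of $2i/\lambda_i$) that the paper notes is already enough for its applications.
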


Using an obvious inequality
$$\frac{2i}{\lambda_i(D,\Gamma)} + 1 \le (2i+1)\max\left
\{\frac1{\lambda_i(D,\Gamma)},1\right\}$$
and denoting, as usual, by $(2n+1)!!$ the product of all odd positive numbers
up to $2n+1$,  we get the following

\begin{cor} \label{cor:latpoints} We have,
$$\prod_{i=1}^n \min\{\lambda_i(D,\Gamma),1\} \le (2n+1)!!
(\#(D\cap\Gamma))^{-1}.$$
\end{cor}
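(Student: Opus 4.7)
The plan is to derive Corollary \ref{cor:latpoints} as a direct algebraic rearrangement of Lemma \ref{lem:latp}, using the elementary pointwise inequality that is explicitly flagged in the statement.

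First I would write the inequality $\frac{2i}{\lambda_i(D,\Gamma)} + 1 \le (2i+1)\max\{1/\lambda_i(D,\Gamma),1\}$. This holds trivially by splitting on cases: if $\lambda_i \le 1$ then the left side is $\tfrac{2i}{\lambda_i}+1 \le \tfrac{2i+1}{\lambda_i}$, and if $\lambda_i \ge 1$ then the left side is bounded by $2i+1$. Substituting this pointwise estimate into Lemma \ref{lem:latp} yields
\begin{equation*}
\#(D\cap\Gamma)\le \prod_{i=1}^n (2i+1) \cdot \prod_{i=1}^n \max\bigl\{1/\lambda_i(D,\Gamma),1\bigr\}.
\end{equation*}

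Next I would identify $\prod_{i=1}^n (2i+1)=3\cdot 5\cdots(2n+1)=(2n+1)!!$ (the leading factor of $1$ in the double-factorial contributes nothing). I would also rewrite
\begin{equation*}
\max\bigl\{1/\lambda_i(D,\Gamma),1\bigr\}=\frac{1}{\min\{\lambda_i(D,\Gamma),1\}},
\end{equation*}
so that the preceding displayed bound becomes
\begin{equation*}
\#(D\cap\Gamma)\cdot \prod_{i=1}^n \min\{\lambda_i(D,\Gamma),1\}\le (2n+1)!!.
\end{equation*}
Dividing both sides by $\#(D\cap\Gamma)$ gives exactly the claimed inequality.

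There is really no obstacle here; the entire content of the corollary is a cosmetic repackaging of Lemma \ref{lem:latp} into a form that bounds the product of truncated successive minima from above in terms of the reciprocal lattice-point count, which is the shape in which it will be applied later. The only item to be slightly careful about is verifying the factorization $\prod_{i=1}^n(2i+1)=(2n+1)!!$ under the convention being used in the paper, but this is immediate.
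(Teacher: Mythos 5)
Your proposal is correct and follows exactly the paper's route: it applies the flagged pointwise inequality $\tfrac{2i}{\lambda_i}+1 \le (2i+1)\max\{1/\lambda_i,1\}$ to Lemma~\ref{lem:latp}, identifies $\prod_{i=1}^n(2i+1)=(2n+1)!!$, and rearranges using $\max\{1/\lambda_i,1\}=1/\min\{\lambda_i,1\}$. Nothing to add.
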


\section{Common Solutions to Many Quadratic Congruences with
Small Coefficients}

We need the following
statement, that can probably be extended in several directions.

\begin{lemma}
\label{lem:CommonSols2} For any positive integer $\nu\ge 3$ there
are numbers  $\eta>0$ and $C>0$, depending only on $\nu$, such that
if for a positive integer
$$
h\le \eta p^{1/\max\{\nu^2-2\nu-2,\nu^2-3\nu+4\}}
$$
and $s\in\F_p$  there are
$h^{\nu-1}$ different sequences $(A_1,\ldots,A_\nu)\in\Z^\nu$ with
$$
|A_i| < 2^i h^{i}, \qquad i =1, \ldots, \nu,
$$
such that
$$
A_1s^{\nu-1}+\ldots+A_{\nu-1}s+A_{\nu}\equiv 0 \pmod p,
$$
then we have the following:
\begin{itemize}
\item[(i)]
If $\nu=3$, then
$$
s\equiv a/b\pmod p
$$
for some integers $a,b$ with $a\ll h^{3/2}$, $b\ll h^{1/2}$.

\item[(ii)] If $\nu= 4$ then there is a nonzero sequence
$(B_2,B_3,B_4)\in\Z^3$ with
$$
 |B_{i}|<C h^{i-2},
\qquad i =2,3,4,
$$
and such that
$$
B_2s^2+B_3s+B_4\equiv 0 \pmod p.
$$

\item[(iii)] If $\nu\ge 5$ then there is a nonzero sequence
$(B_3,\ldots,B_\nu)\in\Z^{\nu-2}$ with
$$
 |B_{i}|<C h^{i-2-1/(\nu-2)},
\qquad i =3, \ldots, \nu,
$$
and such that
$$
B_3s^{\nu-3}+\ldots+B_{\nu-1}s+B_{\nu}\equiv 0 \pmod p.
$$

\end{itemize}
\end{lemma}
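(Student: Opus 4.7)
The argument is a geometry-of-numbers setup combined with resultant bounds and the factor-height estimate of Lemma~\ref{lem:PolCoef}.

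I begin by viewing the hypothesis as a lattice statement: the set $\Gamma$ of integer sequences $(A_1,\ldots,A_\nu)\in\Z^\nu$ satisfying the congruence is a sublattice of $\Z^\nu$ of index~$p$, and the hypothesis says that the symmetric convex box $D=\{x\in\R^\nu:|x_i|\le 2^ih^i\}$ contains at least $h^{\nu-1}$ lattice points of~$\Gamma$. Applying Corollary~\ref{cor:latpoints} yields
$$
\prod_{i=1}^\nu\min\{\lambda_i(D,\Gamma),1\}\ll h^{-(\nu-1)}
$$
(with implied constant depending only on~$\nu$), and I fix $\Q$-linearly independent integer polynomials $P^{(i)}(Z)=\sum_{j=1}^\nu A_j^{(i)}Z^{\nu-j}$ with $|A_j^{(i)}|\le\lambda_i\,2^jh^j$ and $P^{(i)}(s)\equiv 0\pmod p$.

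I then run a case analysis on the largest index $k$ with $\lambda_k\le 1$, so that $\prod_{i=1}^k\lambda_i\ll h^{-(\nu-1)}$ and there are $k$ linearly independent small polynomials. The case $k=\nu$ is excluded because the $\nu\times\nu$ coefficient determinant of $P^{(1)},\ldots,P^{(\nu)}$ is divisible by~$p$ (the nonzero vector $(s^{\nu-1},\ldots,s,1)^{T}\bmod p$ lies in its kernel) but of absolute value $\ll\(\prod_i\lambda_i\)h^{\nu(\nu+1)/2}\ll h^{(\nu^2-\nu+2)/2}$, which is $<p$ under the hypothesis on~$h$, contradicting $\Q$-linear independence. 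The boundary case $k=1$ is handled directly: $|A_j^{(1)}|\ll h^{j-(\nu-1)}$ forces several leading coefficients of $P^{(1)}$ to vanish as integers, leaving a polynomial of exactly the degree and height required by each of (i)--(iii).

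For the intermediate range $2\le k\le\nu-1$ I bound the resultant of the two smallest polynomials using Corollary~\ref{cor:DeterMagic}. Exploiting the fact that $\Res$ is homogeneous of degree $\nu-1$ in each factor's coefficients, the case $\sigma=\vartheta=1$ of Corollary~\ref{cor:DeterMagic} yields
$$
|\Res(P^{(1)},P^{(2)})|\ll\lambda_1^{\nu-1}\lambda_2^{\nu-1}h^{\nu^2-1}.
$$
The AM-GM consequence $\lambda_1\lambda_2\le\(\prod_{i=1}^k\lambda_i\)^{2/k}\ll h^{-2(\nu-1)/k}$ (together with $\lambda_1\ll h^{-(\nu-1)/k}$) reduces this to $\ll h^{\nu^2-1-2(\nu-1)^2/k}$. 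Under the hypothesis on~$h$ this is $<p$; since $s$ is a common root of $P^{(1)}$ and $P^{(2)}$ modulo~$p$, the resultant is divisible by~$p$ and therefore vanishes, so $P^{(1)},P^{(2)}$ share a nontrivial common factor $Q\in\Z[Z]$. Lemma~\ref{lem:PolCoef} applied to $P^{(1)}=QR$ bounds the coefficient of $Z^{\deg Q-j}$ in~$Q$ by $C\lambda_1h^{j+1}$; substituting the AM-GM bound on $\lambda_1$ matches the staircase of heights in (i)--(iii), after padding $Q$ with zero leading coefficients if its degree is below the target. For $\nu=3$, $Q=bZ-a$ is linear and the bounds $b^2\ll\lambda_1\lambda_2 h^3$, $a^2\ll\lambda_1\lambda_2 h^5$ follow by writing out $P^{(i)}(a/b)=0$ and using $\lambda_1\lambda_2\ll h^{-2}$ to obtain~(i).

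The main technical obstacle is the uniform treatment of the case analysis on~$k$: for every intermediate $k$ one must simultaneously ensure that the resultant bound forces $\Res=0$ under the hypothesis on~$h$ \emph{and} that the ensuing height bound on $Q$ matches the target height ($h^{i-2}$ in~(ii), $h^{i-2-1/(\nu-2)}$ in~(iii)). The two expressions inside the maximum, $\nu^2-2\nu-2$ and $\nu^2-3\nu+4$, reflect the two worst sub-cases at different values of~$k$, and the finer bound $h^{i-2-1/(\nu-2)}$ in~(iii) for $\nu\ge 5$ requires, in the critical sub-case $k=\nu-1$, an additional step that trades one degree of~$Q$ for a factor of $h^{1/(\nu-2)}$ in the height, most naturally via a second geometry-of-numbers/resultant argument on a sub-lattice associated with~$Q$.
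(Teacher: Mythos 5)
Your lattice setup, the application of Corollary~\ref{cor:latpoints}, and the exclusion of the case $\lambda_\nu\le1$ via the $\nu\times\nu$ determinant all match the paper and are correct. However, there is a genuine arithmetic gap in your treatment of the crucial intermediate case $k=\nu-1$, i.e.\ $\lambda_{\nu-1}\le1<\lambda_\nu$. Here the only information is $\prod_{i=1}^{\nu-1}\lambda_i\ll h^{1-\nu}$, and in the worst case all these minima are comparable, $\lambda_i\approx h^{-1}$, so that $\lambda_1\lambda_2\approx h^{-2}$. Your resultant bound then gives
$$
|\Res(P^{(1)},P^{(2)})|\ll h^{\nu^2-1}(\lambda_1\lambda_2)^{\nu-1}\approx h^{\nu^2-1-2(\nu-1)}=h^{(\nu-1)^2},
$$
and $(\nu-1)^2>e_\nu$ for every $\nu\ge4$ (e.g.\ $9>8$ when $\nu=4$, $16>14$ when $\nu=5$, and the gap is $3$ once $\nu\ge6$). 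So the hypothesis $h\le\eta p^{1/e_\nu}$ does \emph{not} make the resultant smaller than $p$, and you cannot conclude $\Res(P^{(1)},P^{(2)})=0$ from $\Res\equiv0\pmod p$. Your assertion ``Under the hypothesis on~$h$ this is $<p$'' is false exactly in this range. The paper does not attempt a resultant argument here at all: in its Case~2 it uses the full set of $\nu-1$ linearly independent vectors, solves the $(\nu-1)\times(\nu-1)$ linear system by Cramer's rule to get $s^j\equiv\Delta_j/\Delta_0\pmod p$ with $\Delta_j\ll h^{(\nu-2)(\nu-1)/2+j}$, then compares $s^2$ with $(s)^2$ to obtain $\Delta_1^2\equiv\Delta_0\Delta_2\pmod p$ with both sides $O(h^{\nu^2-3\nu+4})$ — this is where the second term in $e_\nu$ comes from — and runs an arithmetic induction on $j$ to extract a rational representation $s\equiv a/b\pmod p$ with $a\ll h^{\nu/2}$, $b\ll h^{\nu/2-1}$. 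This is a genuinely different mechanism that your resultant framework cannot reproduce; neither the ``second geometry-of-numbers/resultant argument on a sub-lattice'' you gesture at, nor any refinement of the $\Res(P^{(1)},P^{(2)})$ bound, will close the gap $(\nu-1)^2 - e_\nu\ge1$.

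A secondary issue: even when $k\le\nu-2$ your uniform choice $\sigma=\vartheta=1$ is too crude. At $k=\nu-2$ the AM-GM exponent $\nu^2-1-2(\nu-1)^2/(\nu-2)$ exceeds $e_\nu$ once $\nu\ge6$ (e.g.\ $22.5>22$ for $\nu=6$). The paper's Cases~4, 6, 7 get the sharper bound by recording that when $\lambda_j\le h^{-1-1/(\nu-2)}$ or $\lambda_j\le h^{-1}2^{-\nu}$ the leading coefficient of $P^{(j)}$ vanishes, so $\deg P^{(j)}\le\nu-2$ and one applies Corollary~\ref{cor:DeterMagic} with $M$ or $N$ reduced to $\nu-1$ (and with $\sigma,\vartheta$ chosen as $\log(2^\nu\lambda_j)/\log h+2$, not~$1$). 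Moreover, the extra saving $h^{-1/(\nu-2)}$ in part (iii) is extracted by taking \emph{three} polynomials $P^{(1)},P^{(2)},P^{(3)}$, showing $P^{(1)}$ divides both $P^{(2)}$ and $P^{(3)}$, and using their linear independence to force $\deg P^{(1)}\le\deg P^{(j)}-2$ for some $j$; Lemma~\ref{lem:PolCoef} with this \emph{two-degree} drop then yields the stated height. This refinement is not captured by your sketch.
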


\begin{proof}
We can assume that $h\ge h_0(\nu)$ for some appropriate constant
$h_0(\nu)$, depending only on $\nu$. We define the lattice
$$\Gamma = \{(u_1,\ldots,u_\nu)\in\Z^\nu~:~
u_1s^{\nu-1}+\ldots+u_{\nu-1}s+u_\nu\equiv 0 \pmod p\}$$ and the
body
\begin{equation*}
\begin{split}
D = \{(u_1,\ldots,&u_\nu)\in\Z^\nu~:\\
&|u_1|<2^\nu h,\ldots, |u_{\nu-1}|<2^\nu h^{\nu-1},\ |u_{\nu}|<2^\nu
h^{\nu}\}.
\end{split}
\end{equation*}
We know that
$$\#(D\cap\Gamma)\ge h^{\nu-1}.$$
Therefore, by Corollary~\ref{cor:latpoints}, the successive minima
$\lambda_i=\lambda_i(D,\Gamma)$, $i=1,\ldots,\nu$, satisfy the
inequality
\begin{equation}
\label{ineqlambda} \prod_{i=1}^\nu\min\{1,\lambda_i\}\ll h^{1-\nu}.
\end{equation}
In particular, $\lambda_1\le1$.

We consider separately the following
seven cases.

{\it Case~1\/}: $\lambda_\nu\le1$. By definition of $\lambda_i$,
there are linearly independent vectors
$(u_1^i,\ldots,u_\nu^i)\in\lambda_iD\cap\Gamma$, $i=1,\ldots,\nu$.
By~\eqref{ineqlambda}, we have $\lambda_1\ldots\lambda_\nu\ll
h^{1-\nu}$. We consider the determinant
\begin{equation*}
\Delta = \det  \(
  \begin{array}{cccccccc}
    u_1^1 & \ldots & u_\nu^1\\
    \ldots & \ldots & \ldots\\
    u_1^\nu & \ldots & u_\nu^\nu\\
  \end{array}
\).
\end{equation*}
Clearly,
$$\Delta \ll h^{(\nu^2+\nu)/2}\lambda_1\ldots\lambda_\nu \ll h^{(\nu^2-\nu+2)/2}.
$$
On the other hand, from
$$u_1^is^{\nu-1} + \ldots + u_\nu^i \equiv 0\pmod p,\qquad i=1,\ldots,\nu$$
we conclude that $\Delta$ is divisible by $p$. Therefore, for a
sufficiently large $h_0(\nu)$ we derive $\Delta=0$, but this
contradicts linear independence of the vectors
$(u_1^i,\ldots,u_\mu^i)$, $i=1,\ldots,\nu$. Thus this case is
impossible.

{\it Case~2\/}:  $\lambda_{\nu-1}\le 1,\,\lambda_{\nu}>1$. We can
assume that $s\not\equiv 0\pmod p$. By definition, there are
linearly independent vectors
$(u_1^i,\ldots,u_{\nu}^i)\in\lambda_iD\cap\Gamma$,
$i=1,\ldots,\nu-1$. By~\eqref{ineqlambda}, we have
\begin{equation}
\label{ineqlambda nu-1} \lambda_1\ldots\lambda_{\nu-1}\ll h^{1-\nu}.
\end{equation}
Again by definition,
\begin{equation}
\label{eq:Case2SystemEqs^j} \left\{
\begin{array}{lll}
u_1^1s^{\nu-1}+\ldots+u_{\nu-1}^1s&\equiv &-u_{\nu}^1\pmod p;\\
&\ldots & \\
u_1^{\nu-1}s^{\nu-1}+\ldots+u_{\nu-1}^{\nu-1}s &\equiv
&-u_{\nu}^{\nu-1}\pmod p.
\end{array}\right.
\end{equation}
Let
\begin{equation*}
\Delta_0 = \det  \(
  \begin{array}{cccccccc}
    u_1^1 & \ldots & u_{\nu-1}^1\\
    \ldots & \ldots & \ldots\\
    u_1^{\nu-1} & \ldots & u_{\nu-1}^{\nu-1}\\
  \end{array}
\)
\end{equation*}
and for $i=1,2,\ldots,\nu-1$ let $\Delta_{\nu-i}$ be the determinant
of the matrix obtained from the matrix
\begin{equation*}
\(
  \begin{array}{cccccccc}
    u_1^1 & \ldots & u_{\nu-1}^1\\
    \ldots & \ldots & \ldots\\
    u_1^{\nu-1} & \ldots & u_{\nu-1}^{\nu-1}\\
  \end{array}
\)
\end{equation*}
by replacing its $i$-th column by $(-u_{\nu}^1,-u_{\nu}^2,\ldots,
-u_{\nu}^{\nu-1})$. From~\eqref{ineqlambda nu-1} we conclude that
\begin{equation}
\label{eq:Case2Xj} \Delta_j \ll h^{(\nu-2)(\nu-1)/2+j}, \qquad
j=0,1,\ldots, \nu-1.
\end{equation}
If $\Delta_0\equiv 0 \pmod p$, then $\Delta_0=0$ and from the system
of congruences~\eqref{eq:Case2SystemEqs^j} we derive that
$\Delta_j\equiv 0\pmod p$ and thus, in view of~\eqref{eq:Case2Xj},
we have $\Delta_j=0$ for all $1\le j\le \nu-1$. Hence, the rank of
the matrix
\begin{equation*}
\(
  \begin{array}{cccccccc}
    u_1^1 & \ldots & u_{\nu-1}^1 & u_{\nu}^1\\
    \ldots & \ldots & \ldots & \ldots\\
    u_1^{\nu-1} & \ldots & u_{\nu-1}^{\nu-1}& u_{\nu}^{\nu-1}\\
  \end{array}
\)
\end{equation*}
is strictly less than $\nu-1$, which contradicts to the linear
independence of the corresponding vectors.

Thus, we have that
$$
\Delta_0\not\equiv 0 \pmod p.
$$
Next, from the system~\eqref{eq:Case2SystemEqs^j} we find that
\begin{equation}
\label{eq:GeneralCase2s^j} s^j\equiv \Delta_j/\Delta_0\pmod p, \qquad
 j=1,\ldots, \nu-1.
\end{equation}
Since $s\not\equiv 0\pmod p$,  $\Delta_j\not\equiv 0\pmod p$.
Comparing $s$ and $s^2$ we obtain
$$
\Delta_1^2\equiv \Delta_0\Delta_2\pmod p.
$$
Since both hand sides are $O\(h^{\nu^2-3\nu+4}\)$, we see that
$$
\Delta_1^2=\Delta_0\Delta_2.
$$
Thus, there exist coprime integers integers $a,b$ such that for
$r_2=\gcd(\Delta_0,\Delta_2)$ we have
\begin{equation}
\label{eq:inductionj=2Xrab} \Delta_0=r_2b^2,\quad
\Delta_2=r_2a^2\quad \Delta_1=r_2ab.
\end{equation}

Now we claim that  there are integers $r_2,\ldots,r_{\nu-1}$ such
that the equalities
\begin{equation}
\label{eq:claim} \Delta_0=r_jb^j,\quad \Delta_1=r_jab^{j-1},\quad
\Delta_j=r_ja^j
\end{equation}
hold for all $j=2,3,\ldots, \nu-1$.

We prove this claim by induction on $j$. For $j=2$ the statement
follows from~\eqref{eq:inductionj=2Xrab}. We now assume
that~\eqref{eq:claim} holds for some $2\le j\le \nu-2$ and prove it
with $j$ replaced by $j+1$.

Comparing $s$ and $s^{j+1}$ in~\eqref{eq:GeneralCase2s^j}, we get
$$
\Delta_1^{j+1}\equiv \Delta_{j+1}\Delta_0^j\pmod p.
$$
We substitute here $\Delta_0$ and $\Delta_1$ in accordance to our
induction hypothesis. After cancellations (recall that
$\Delta_j\not\equiv 0\pmod p$), we get
\begin{equation}
\label{eq:cong raXb} r_ja^{j+1}\equiv \Delta_{j+1}b\pmod p.
\end{equation}
In view of the induction hypothesis, the left hand side
of~\eqref{eq:cong raXb} is of size at most
$$
|r_ja^j|^{(j+1)/j}\ll |\Delta_j|^{(j+1)/j}\ll
h^{((\nu-2)(\nu-1)/2+j)(j+1)/j}.
$$
Since
$$((\nu-2)(\nu-1)/2+j)(j+1)/j \le ((\nu-2)(\nu-1)/2+\nu-2)3/2
< \nu^2-3\nu+4,$$
we get
$$
|r_ja^j|^{(j+1)/j}\ll h^{\nu^2-3\nu+4}.
$$
Thus, we see  that the left hand side of~\eqref{eq:cong raXb} is
less than $p/2$. Again in view of the induction hypothesis we have
$|b|\le |\Delta_0|^{1/j}$. Hence, in view of~\eqref{eq:Case2Xj}, the
right hand side of~\eqref{eq:cong raXb} is
$$
\Delta_{j+1}b\ll
h^{(\nu-2)(\nu-1)/2+j+1}h^{(\nu-2)(\nu-1)/(2j)}\ll h^{\nu^2-3\nu+4}.
$$
Thus, again from the condition of the lemma we get that the right hand
side is less, than $p/2$. Hence, the congruence is converted to the
equality
$$
r_ja^{j+1}=\Delta_{j+1}b.
$$
Since $\gcd(a,b)=1$, this implies that for some integer $r_{j+1}$ we
have
$$
\Delta_{j+1}=r_{j+1}a^{j+1},\quad r_j=br_{j+1}.
$$
Replacing $r_j$ with its value given by the induction hypothesis, we
arrive to~\eqref{eq:claim}.

We have
$$
s\equiv \Delta_1/\Delta_0\equiv a/b\pmod p.
$$
In our intermediate statement we take  $j=\nu-1$ and
$$
|a|\le |\Delta_{\nu-1}|^{1/(\nu-1)}\ll h^{\nu/2},\quad |b|\le
|\Delta_0|^{1/(\nu-1)}\ll h^{\nu/2-1}.
$$
If $\nu=3$ then we are done. If $\nu=4$, then the statement
follows by taking $B_2=0, B_3=-a, B_4=b$.
If $\nu\ge 5$, then the statement
follows by taking
$$
B_{\nu}=-a, \quad B_{\nu-1}=b
$$
and $B_j=0$ for $j<\nu-1$.

Below we  use the following argument.
As in Cases~1 and 2, if $\lambda_r\le 1$ for some $r=1,\ldots,\nu$, then,
by definition there are linearly independent vectors
$(u_1^j,\ldots,u_{\nu}^j)\in\lambda_jD\cap\Gamma$,
$j=1,\ldots,r$. Clearly, we can assume that $\gcd (u_1^j,\ldots,u_{\nu}^j)=1$.
Next, we construct linear independent polynomials
$$P_j(Z)=\sum_{i=1}^\nu u_i^j Z^{\nu-i},\quad j=1,\ldots,r.$$
We note that $P_j(s)\equiv 0 \pmod p$ for $j=1,\ldots,r$.

{\it Case~3\/}: $\lambda_1\le 31 h^{-2}$ for $\nu\le4$ and
$\lambda_1\le h^{-2-1/(\nu-2)}$ for $\nu\ge5$.
For $\nu\le4$ we have $u_i^1\ll h^{i-2}$. Therefore, $u_1^1=0$
provided that $h_0(\nu)$ is large enough. If $\nu=3$ we take
$a=-u_3^1$, $b=u_2^1$. If $\nu\ge4$ we take
$B_i=u_i^1$ where $2\le i\le\nu$ for $\nu=4$ and $3\le i\le\nu$
for $\nu\ge5$.

We observe that if $\nu=3$ and $\lambda_2\ge1$ then, by
Corollary~\ref{cor:latpoints} we get $\lambda_1\le15 h^{-2}$.
Thus, for $\nu=3$ at least one of Cases~1--3 holds and the proof is complete.
Throughout the following we  always assume that $\nu\ge4$.

If Case~3 does not hold, then we have $\lambda_2\le1$ by~\eqref{ineqlambda}.
Thus, a polynomial $P_2$ is well-defined.

We denote $R_j=\gcd(P_1,P_j)$ if $\Res(P_1,P_j)=0$ for some $j>1$.
We have $R_j(s)\equiv 0 \pmod p$. If $R_j\neq\pm P_1$ then
$\deg R_j\le\deg P_1-1$ (taking into account that the coefficients
of $P_1$ are coprime). If, moreover, $\lambda_{\nu-1}>1$ then,
by~\eqref{ineqlambda}, we have
\begin{equation}
\label{eq:lam1est}
\lambda_1\ll h^{-1-1/(\nu-2)}.
\end{equation}
This inequality implies $u_1^1=0$, that is, $\deg P_1\le\nu-2$
(provided that $h_0(\nu)$ is large enough) and $u_i^1\ll h^{i-1-1/(\nu-2)}$ for
$i\ge2$. Therefore, if $R_j\neq\pm P_1$ and $\lambda_{\nu-1}>1$, then,
by Lemma~\ref{lem:PolCoef}, the coefficients of the polynomial $R_j$ satisfy
the statement of the theorem. Hence, we can suppose that $P_1$ divides $P_j$.

{\it Case~4\/}: $\lambda_1> 31 h^{-2}$, $\lambda_2\le 31 h^{-1}$
for $\nu= 4$ and $\lambda_1> h^{-2-1/(\nu-2)}$,
$\lambda_2\le h^{-1-1/(\nu-2)}$ for $\nu\ge5$; $\lambda_{\nu-1}>1$.

Suppose that $\nu=4$. We take $M=3$, $N=4$, $\sigma=\log(16\lambda_1)/\log h+2$,
$\vartheta=\log(16\lambda_2)/\log h+1$. Condition (i) of
Lemma~\ref{lem:neq:sigma} holds, and we can use Corollary~\ref{cor:DeterMagic}
taking into account that we have, by~\eqref{ineqlambda},
$\lambda_1\lambda_2\ll h^{-3}$. Hence,
$\Res(P_1, P_2)\ll h^8$ and  $|\Res(P_1, P_2)|<p$ provided that $\eta$
has been chosen small enough. On the other hand,  since
$P_1(s)\equiv P_2(s)\equiv 0\pmod p$,
$\Res(P_1, P_2)$ is divisible by $p$. Consequently, $\Res(P_1, P_2)=0$.
By our supposition $P_1$ divides $P_2$. Since $P_1$ and $P_2$
are linearly independent, we conclude that $\deg P_1\le \deg P_2-1$.
Using the inequality $\lambda_2\le 31 h^{-1}$ and Lemma~\ref{lem:PolCoef},
we see that the coefficients of the polynomial $R_j$ satisfy the statement of the theorem.

For $\nu\ge5$ the proof is similar. The inequality $\lambda_2\le h^{-1-1/(\nu-2)}$
implies $u_1^2=0$ provided that $h_0(\nu)$ is large enough. So,
$\deg P_1\le\nu-2$, $\deg P_2\le\nu-2$. Now we take $M=N=\nu-1$,
$\sigma=\log(2^\nu\lambda_1)/\log h+2$, $\vartheta=\log(2^\nu\lambda_2)/\log h+2$.
The condition~(iii) of Lemma~\ref{lem:neq:sigma} holds, and we can use
Corollary~\ref{cor:DeterMagic}. By~\eqref{ineqlambda}, we get
$\lambda_1\lambda_2\ll h^{-2-2/(\nu-2)}$. Therefore,
gives $\Res(P_1, P_2) \ll h^{\nu^2-2\nu-2}$. The rest is essentially
the same as for $\nu=4$.

Now suppose that $\nu=4$ and neither of Cases~1--4 holds. We conclude that
$$
\prod_{i=1}^4 \min\{\lambda_i(D,\Gamma),1\} > 31^2h^{-3}\ge
31^2(\#(D\cap\Gamma))^{-1}.
$$
However, this inequality contradicts Corollary~\ref{cor:latpoints}.
Thus, for $\nu=4$ at least one of Cases~1--4 holds and the proof is complete.
Throughout the following we  always assume that $\nu\ge5$.

In the rest of the proof we  estimate $\Res(P_1, P_j)$ for $j=2$
or $j=3$ by Corollary~\ref{cor:DeterMagic} considering that
$\lambda_1> h^{-2-1/(\nu-2)}$ and $\lambda_2> h^{-1-1/(\nu-2)}$.
We take $M=\nu-1$, $\sigma=\log(2^\nu\lambda_1)/\log h+2$.
If we know that $\deg P_j\le\nu-2$ then we take
$N=\nu-1$, $\vartheta=\log(2^\nu\lambda_1)/\log h+2$.
The inequality $\lambda_j> h^{-1-1/(\nu-2)}$ implies $\vartheta\ge0$,
and condition (ii) of Lemma~\ref{lem:neq:sigma} holds.
Otherwise, we take
$N=\nu$, $\vartheta=\log(2^\nu\lambda_1)/\log h+1$
and have the condition~(iii) of Lemma~\ref{lem:neq:sigma}.

{\it Case~5\/}: $\nu=5$, $\lambda_1> h^{-2-1/3}$, $\lambda_2>
h^{-1-1/3}$, $\lambda_4>1$. Assuming that $\lambda_3>1$ we get
contradiction with inequality~\eqref{ineqlambda} provided that
$h_0(5)$ is large enough. Hence, $\lambda_3\le1$. Using
again~\eqref{ineqlambda} we get
$$\lambda_1\ll h^{-4/3},\quad\lambda_1\lambda_j
\le\lambda_1\lambda_3\ll h^{-8/3}\quad (j=2,3).$$
Now we are in position to apply Corollary~\ref{cor:DeterMagic}
to the polynomials $P_1, P_j$ (recalling that $\deg P_1\le3$,
$\deg P_j\le4$). We get
$$\Res(P_1,P_j)\ll h^{23}(\lambda_1\lambda_j)^3\lambda_1\ll h^{41/3}.$$
Hence,  $|\Res(P_1, P_2)|<p$ provided that $h_0(5)$
has been chosen large enough. As before, we deduce that
$\Res(P_1,P_j)=0$. By our supposition, $P_1$ divides $P_2$ and $P_3$.
Since $P_1$, $P_2$ and $P_3$
are linearly independent, we conclude that $\deg P_1\le \deg P_j-2$
for $j=2$ or $j=3$. Using the inequality $\lambda_j\ll h^{-1/3}$ and
Lemma~\ref{lem:PolCoef}, we see that $P_1$ has the form $AZ^2+BZ+C$ where
$A \ll h^{2/3}$, $B\ll h^{5/3}$, $C\ll h^{8/3}$ as required.

Now the proof is complete for $\nu=5$.

{\it Case~6\/}: $\nu\ge6$, $\lambda_3\le h^{-1}2^{-\nu}$,
$\lambda_{\nu-1}>1$. Since $\lambda_j\le h^{-1}2^{-\nu}$
for $j=1,2,3$, we have $u_1^j=0$ and $\deg P_j\le\nu-2$.
We conclude from~\eqref{ineqlambda} that
$\lambda_1^2\lambda_3^{\nu-4}\ll h^{1-\nu}$. Hence,
$$\lambda_1\lambda_j\le\lambda_1\lambda_3\ll h^{-2-2/(\nu-2)}$$
for $j=2,3$. By Corollary~\ref{cor:DeterMagic} we have
$\Res(P_1, P_j)\ll h^{\nu^2-2\nu-2}$. As in the previous case, we
deduce that $\deg P_1\le \deg P_j-2$ for $j=2$ or $j=3$.
Using the inequality $\lambda_j\ll h^{-1}$ and
Lemma~\ref{lem:PolCoef}, we conclude that $\deg P_1\le\nu-4$
and $u_j^1\ll h^{j-3}$ for $j\ge4$, and the desired result follows.

{\it Case~7\/}: $\nu\ge6$, $\lambda_1> h^{-2-1/(\nu-2)}$,
$\lambda_2> h^{-1-1/(\nu-2)}$, $\lambda_3> h^{-1}2^{-\nu}$,
$\lambda_{\nu-1}>1$. Taking into account lower bounds for
$\lambda_2$ and $\lambda_j\,(j=4,\ldots,\nu-2)$ we
conclude from~\eqref{ineqlambda} that
$$\lambda_1\lambda_j\le\lambda_1\lambda_3\ll h^{-3+1/(\nu-2)}$$
for $j=2,3$. Next, using the lower bound for $\lambda_1$, we get
\begin{equation}
\label{eq:lam3est}
\lambda_3\ll h^{-(\nu-4)/(\nu-2)}.
\end{equation}
Assuming that $h_0(\nu)$ is large enough, we have $\lambda_3\le1$.
Hence, the polynomials $P_j$ are defined for $j\le3$;
moreover, $\deg P_1\le\nu-2$ and $\deg P_j\le\nu-1$ for $j=2,3$.
By Corollary~\ref{cor:DeterMagic}, we have for $j=2,3$
$$
\Res(P_1, P_j)\ll h^{\nu^2-2}(\lambda_1\lambda_3)^{\nu-1}\lambda_3^{-1}
\ll h^u,
$$
where
$$u=\nu^2-2-(3-1/(\nu-2))(\nu-1)+1=\nu^2-3\nu+3+1/(\nu-2)<\nu^2-3\nu+4.$$
As in the previous cases, we consequently conclude that $\Res(P_1,P_j)=0$,
$P_1$ divides $P_j$ for $j=2,3$ and $\deg P_1\le \deg P_j-2$ for $j=2$ or $j=3$.
Using~\eqref{eq:lam3est} and Lemma~\ref{lem:PolCoef} completes the proof.
\end{proof}

\begin{rem} One can try to separate the case
$$\lambda_{\nu-2}\le h^{-1}2^{-\nu},\quad \lambda_{\nu-1}>1$$
and to use the same arguments as in Case~2. We expect that
by this way it is possible to improve slightly the exponent in
the restriction on $h$ for $\nu\ge7$, however we have not
attempted to do so.
\end{rem}
 \section{Product Sets in $\F_p$}

Here we obtain some upper bounds on $K_{\nu}(p,h,s)$ that hold for
all primes.

\begin{theorem}
\label{thm:GProdSet} Let $\nu \ge 3$ be a fixed integer and let
$$
e_\nu=\max\{\nu^2-2\nu-2,\nu^2-3\nu+4\}.
$$
Then we have the bound
$$
K_{\nu}(p,h,s)\le  \(\frac{h^\nu}
{p^{\nu/e_\nu}} + 1\)
h^\nu\exp\(c(\nu)\frac{\log h}{\log\log h}\),
$$
where  $c(\nu)$ depends only on $\nu$.
\end{theorem}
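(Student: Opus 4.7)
The plan is to treat the theorem in two regimes separated by the threshold $h_0 := \eta p^{1/e_\nu}$ for $\eta = \eta(\nu) > 0$ small enough. In the small-$h$ range $h \le h_0$, I aim to show $K_\nu(p,h,s) \le h^\nu \exp(c(\nu) \log h/\log\log h)$; the large-$h$ range is then recovered by partitioning $[1,h]$ and using Lemma~\ref{lem:Kss}.

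For the small-$h$ case, each pair $(\mathbf{x}, \mathbf{y})$ counted by $K_\nu(p,h,s)$ yields a tuple $A_k := e_k(\mathbf{x}) - e_k(\mathbf{y})$ with $|A_k| \le 2\binom{\nu}{k} h^k$ satisfying $\sum_{k=1}^\nu A_k s^{\nu-k} \equiv 0 \pmod p$. If at least $h^{\nu-1}$ distinct such tuples are realized, Lemma~\ref{lem:CommonSols2} supplies an algebraic number $\alpha$ of logarithmic height $\ll \log h$ (degree $1$ for $\nu=3$, degree $\le 2$ for $\nu=4$, degree $\le \nu-3$ for $\nu \ge 5$) and a prime $\pf$ of $\Z_{\Q(\alpha)}$ above $p$ with $s \equiv \alpha \pmod \pf$. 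I would then lift the original congruence to the identity $\prod_i(x_i + \alpha) = \prod_i(y_i + \alpha)$ in $\Z_{\Q(\alpha)}$: writing the difference as $\sum_k A_k \alpha^{\nu-k}$ and reducing higher powers of $\alpha$ via its minimal polynomial from Lemma~\ref{lem:CommonSols2}, the absolute norm is $\ll h^{e_\nu} < p \le N(\pf)$ under $h \le \eta p^{1/e_\nu}$, forcing the difference to vanish. With the integer identity in hand, I write $K_\nu(p,h,s) \le \sum_c N_c^2$ with $c \in \Z_{\Q(\alpha)}$ and $N_c = \#\{\mathbf{x} : \prod_i(x_i+\alpha) = c\}$; Chang's Lemma~\ref{lem:Div ANF} then yields $N_c \le \exp(c(\nu) \log h/\log\log h)$, and combined with $\sum_c N_c \le h^\nu$ this gives the claim.

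For the large-$h$ case $h > h_0$, I partition $[1,h]$ into $\le \lceil h/h_0 \rceil$ sub-intervals of length $\le h_0$ and classify each pair by which sub-interval contains each of the $2\nu$ coordinates. After substitution $x_i = a_i + x_i'$, $y_i = b_i + y_i'$ with $x_i', y_i' \in [0, h_0]$, each sub-piece is a $K_\nu$-count on intervals of length $h_0$ with possibly distinct shifts; a straightforward extension of Lemma~\ref{lem:Kss} to unequal shifts (via H{\"o}lder's inequality across multiplicative characters with $2\nu$ factors rather than $\nu$) dominates each piece by $\max_{s'} K_\nu(p, h_0, s')$. Applying the small-$h$ bound and summing over the $\le (2h/h_0)^{2\nu}$ sub-interval tuples yields $K_\nu(p,h,s) \ll h^{2\nu} h_0^{-\nu} \exp(c(\nu)\log h/\log\log h) \ll (h^\nu/p^{\nu/e_\nu})\, h^\nu \exp(c(\nu)\log h/\log\log h)$, matching the first term of the theorem.

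The main obstacle will be the norm estimate for $\sum_k A_k \alpha^{\nu-k}$ in $\Z_{\Q(\alpha)}$ for $\nu \ge 5$: naive term-by-term bounds on each complex embedding can exceed $p$ after taking norms, and the correct argument requires reducing the sum modulo the minimal polynomial of $\alpha$ to the form $c\alpha + d$ with $c, d$ controlled by the $B_i$-bounds from Lemma~\ref{lem:CommonSols2}, whose exponent $1/(\nu-2)$ is precisely matched to $e_\nu = \max(\nu^2-2\nu-2,\nu^2-3\nu+4)$. A secondary subtlety is handling the case where fewer than $h^{\nu-1}$ distinct tuples are realized, so that Lemma~\ref{lem:CommonSols2} does not apply directly: this requires either a finer pigeonhole on the realizations $R(A)$ per tuple $A$ using the algebraic-integer divisor bound in $\Q(\beta)$ for $\beta$ a root of $Q_A(z) = \sum_k A_k z^{\nu-k}$, or a resultant-style argument via Corollary~\ref{cor:DeterMagic}.
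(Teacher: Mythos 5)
Your outline captures the paper's core ideas---the reduction to $h<\eta p^{1/e_\nu}$ via Lemma~\ref{lem:Kss} and H{\"o}lder, forming the difference polynomial $R(Z)=\sum A_kZ^{\nu-k}$, invoking Lemma~\ref{lem:CommonSols2} to get $R^*$, turning the congruence into an exact identity via a resultant/norm bound (which is really Corollary~\ref{cor:DeterMagic}, not a ``reduce to $c\alpha+d$'' step), and finishing with Lemma~\ref{lem:Div ANF}. Your $\sum_c N_c^2$ framing of the ``many distinct tuples'' case is a slight tidying of the paper's count and is fine. But there is a genuine gap in the complementary case.

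The paper does \emph{not} run the dichotomy on the number of distinct tuples globally. It first pigeonholes on $x_1=x_1^*$ (losing a factor $h$) and then shows that, among solutions with $x_1=x_1^*$ and with $\{x_1,\ldots,x_\nu\}\cap\{y_1,\ldots,y_\nu\}=\emptyset$, each polynomial $R$ occurs at most $\exp(c_0(\nu)\log h/\log\log h)$ times; this is a divisor bound applied to the nonzero integer $M=-Q(-x_1^*)=\prod_i(y_i-x_1^*)$, and it is precisely to guarantee $y_i\ne x_1^*$ that the coincidence solutions $x_i=y_j$ are first peeled off and bounded by $\nu^2 h\,K_{\nu-1}(p,h,s)$ via induction on $\nu$, starting from the known $\nu=2$ case. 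Only then does the dichotomy ``$N_1\ge h^{\nu-1}$ distinct $R$'s or not'' make sense, with the ``not'' branch immediately giving $N<h^\nu\exp(\ldots)$. Your version---dichotomize before fixing $x_1$---cannot work: for a fixed nonzero tuple $A$, for each $\mathbf{x}$ the vector $\mathbf{y}$ is determined up to $\nu!$ permutations by the symmetric-function relations $e_k(\mathbf{y})=e_k(\mathbf{x})-A_k$, so a single tuple can account for $\asymp\nu!\,h^\nu$ solutions, and ``fewer than $h^{\nu-1}$ distinct tuples'' gives only $K_\nu\ll h^{2\nu-1}$, which is useless. The ``algebraic-integer divisor bound in $\Q(\beta)$ for a root of $Q_A$'' you suggest as a fix suffers from the same problem: without fixing a coordinate, the LHS of $\prod(x_i+\beta)=\prod(y_i+\beta)$ still ranges over $\sim h^\nu$ values. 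The missing ingredients are exactly the $x_1$-pigeonhole, the $M\ne 0$ condition via disjointness, and the $\nu$-induction that makes the disjointness reduction free.
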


\begin{proof}

Using Lemma~\ref{lem:Kss}, we see that we can assume that
\begin{equation}
\label{eq:small h} h< \eta p^{1/e_\nu}
\end{equation}
for some small constant $\eta>0$.

It is more convenient to include the case $\nu=2$. For $\nu=2$ we
know from~\cite{BGKS} the bound
\begin{equation}
\label{eq:nu=2} K_{\nu}(p,h,s)\le  h^2\exp\(c\frac{\log h}{\log\log
h}\),\qquad h\le p^{1/3},
\end{equation}
where  $c>0$ (see also Lemma~\ref{lem:GProdSet nu=2}). For
$\nu\ge3$ we  prove by induction on $\nu$ the estimate
\begin{equation}
\label{eq:nu>2} K_{\nu}(p,h,s)\le  h^\nu\exp\(c(\nu)\frac{\log
h}{\log\log h}\),\qquad h\le \eta_\nu p^{1/e_\nu}.
\end{equation}
By the induction hypothesis (the inequalities~\eqref{eq:nu=2} for
$\nu=3$ and~\eqref{eq:nu>2} for $\nu>3$), the set $(x_1,\ldots,
x_{\nu})$ of solutions of the congruence~\eqref{eq:cong x,y} for
which $x_i=y_j$ for some $i,j$ contributes to $K_{\nu}(p,h,s)$  at
most
\begin{equation}
\label{eq:Bad Sols2} h^{\nu}\nu^2 \exp\(c(\nu-1)\frac{\log
h}{\log\log h}\) \le h^{\nu}\exp\(0.5 c(\nu)\frac{\log h}{\log\log
h}\),
\end{equation}
provided that $h$ is large enough and we also choose $c(\nu) >
2c(\nu-1)$.

We associate with any solution of~\eqref{eq:cong x,y} such that
\begin{equation}
\label{eq:x neq y} \{x_1,\ldots, x_{\nu}\}\cap \{y_1,\ldots,
y_{\nu}\}=\emptyset,
\end{equation}
the polynomials
$$P(Z)=(x_1+Z)\ldots(x_{\nu}+Z),\qquad Q(Z)=(y_1+Z)\ldots(y_{\nu}+Z),
$$
and
\begin{equation}
\label{eq:Poly R} R(Z)= P(Z)-Q(Z).
\end{equation}
We note that each such  polynomial $R(Z)$ is nonzero and has a form
$$
R(Z)=A_1Z^{\nu-1}+\ldots+A_{\nu-1}Z+A_{\nu} \in \Z[Z],
$$
with $|A_i| \le 2^\nu h^{i}$, $i =1, \ldots, \nu$. In particular,
since $R(s)\equiv 0\pmod p$, it follows that $R(Z)$ is not a
constant polynomial.

Let $N$ be the number of the solutions of~\eqref{eq:cong x,y}
satisfying~\eqref{eq:x neq y}. We proceed as in the proof of
Lemma~\ref{lem:GProdSet nu=2}. By the pigeon-hole principle we have
at least $N/h$ solutions with the same $x_1=x_1^*$. We claim that
any polynomial $R$ induced by these solutions occurs at most
$\exp\(c_0(\nu) \log h/\log\log h\)$ times for some constant
$c_0(\nu)$ depending only on $\nu$. Indeed, fix $R$ and take
$Z=-x_1^*$. We get
\begin{equation}
\label{eq: repr M} M=-Q(-x_1^*)=z_1\ldots z_\nu,
\end{equation}
where $M=R(-x_1^*)$, $z_i=-x_1^*+y_i$, $i=1,\ldots,\nu$. The number
of solutions to~\eqref{eq: repr M} is bounded by $\exp\(c_0(\nu)\log
h/\log\log h\)$. Each solution determines the numbers
$y_1,\ldots,y_\nu$ and the polynomial $P$, and for each $P$ there
are at most $(\nu-1)!$ solutions of~\eqref{eq:cong x,y}. This proves
the claim.

Therefore, we can take
\begin{equation}
\label{eq: N_1 lower est} N_1\ge \exp\(-c_0(\nu)\frac{\log
h}{\log\log h}\)h^{-1}N
\end{equation}
solutions of~\eqref{eq:cong x,y} satisfying~\eqref{eq:x neq y} with
$x_1=x_1^*$ and distinct polynomials $R$. Assume that
$$
N_1\ge h^{\nu-1}.
$$
as otherwise there is nothing to prove (if we take $c(\nu) >
c_0(\nu) + 1$). Now we are in position to use
Lemma~\ref{lem:CommonSols2}.

If $\nu=3$, then we have
$$
s\equiv a/b\pmod p
$$
for some integers $a,b$ with $a\ll h^{3/2}$, $b\ll h^{1/2}$.

Note that for each solution $(x_1,x_2,x_3,y_1,y_2,y_3)$ that contributes to $N$, we have
\begin{equation*}
\begin{split}
0 \equiv (x_1+s)(x_2+s)(x_3+s)-(y_1&+s)(y_2+s)(y_3+s) \\
\equiv
(x_1+x_2+ x_3 & -y_1-y_2-y_3)s^2\\
 +(x_1x_2  +x_2&x_3+x_3x_1-y_1y_2-y_2y_3-y_3y_1)s\\
  +(x_1x_2&x_3-y_1y_2y_3) \pmod p.
\end{split}
\end{equation*}
Recalling that $s\equiv ab^{-1}\pmod p$, we now obtain
\begin{equation}
\label{eq:cong bc}
\begin{split}
(x_1+x_2+ x_3 -y_1-&y_2-y_3)a^2\\
 +(x_1x_2  +x_2&x_3+x_3x_1-y_1y_2-y_2y_3-y_3y_1)ab\\
  +(x_1&x_2x_3-y_1y_2y_3)b^2 \equiv 0\pmod p.
\end{split}
\end{equation}

Since the right hand side of~\eqref{eq:cong bc} is $\ll h^4$ we obtain
the equation
$$
(bx_1+a)(bx_2+a)(bx_3+a)=(by_1+a)(by_2+a)(by_3+a) + \lambda bp,
$$
where
$$
1\le x_i,y_i\le h, \qquad i =1, 2, 3,
$$
with some $\lambda \ll h^4/p + 1 \ll 1$. Recalling the well-known bound on the divisor function
(a special case of  Lemma~\ref{lem:Div ANF}) we obtain the result.

If $\nu\ge4$, then, by Lemma~\ref{lem:CommonSols2} we get a polynomial
$$R^*(Z)= B_2Z^{\nu-2}+\ldots+B_{\nu-1}Z+B_{\nu}$$
with $R^*(s)\equiv0 \pmod p$,
$$ |B_{i}|<2^\nu h^{i-2}, \qquad i =2,3,4,$$
for $\nu=4$ and
$$ B_2=0,\quad |B_{i}|<2^\nu h^{i-2-1/(\nu-2)},\qquad i =3, \ldots, \nu,$$
for $\nu\ge5$.

We fix such a polynomial $R^*$ and consider an arbitrary solution
of~\eqref{eq:cong x,y}, satisfying~\eqref{eq:x neq y} with
$x_1=x_1^*$ and take the corresponding polynomial $R$ given
by~\eqref{eq:Poly R}. Using Corollary~\ref{cor:DeterMagic}, and
recalling the assumption~\eqref{eq:small h}, we see that
$$
\Res(R,R^*)\ll p^8
$$
for $\nu=4$ and
$$\Res(R,R^*) \ll p^{\nu^2-2\nu-2-1/(\nu-2)}
$$
for $\nu\ge5$.
Thus,
\begin{equation}
\label{eq:Small Res} |\Res(R,R^*)|<p,
\end{equation}
provided that $\eta>0$ is small enough. Since
$$
R(s) \equiv R^*(s) \equiv 0 \pmod p
$$
we also have
\begin{equation}
\label{eq:Div Res} \Res(R,R^*) \equiv 0 \pmod p.
\end{equation}
Therefore, we see from~\eqref{eq:Small Res} and~\eqref{eq:Div Res}
that
$$
\Res(R,R^*) = 0.
$$
Hence, every polynomial $R$ has a common root with $R^*$. Thus,  by
Lemma~\ref{lem:HeighDiv}, we find an algebraic number $\beta$ of
logarithmic height $O(\log h)$  in an extension $\K$ of $\Q$ of
degree $[\K:\Q] \le \nu$ such that the equation
\begin{equation}
\label{eq:ConcentrationProdAlgebraic}
(x_1+\beta)\ldots
(x_{\nu}+\beta)= (y_1+\beta)\ldots (y_{\nu}+\beta)\neq0,
\end{equation}
where
$$1\le x_i,y_i\le h \mand x_1=x_1^*\not=y_i, \quad i=1, \ldots, \nu,
$$
has at least $N_1/\nu$ solutions. Now we have that
$$\beta =\frac{\alpha}{q},
$$
where $\alpha$ is an algebraic integer of height at most $O(\log h)$
and $q$ is a positive integer $q\ll h^{\nu}$, see~\cite{Nar}. From
the basic properties of algebraic numbers it now follows that the
numbers
$$qx_i+\alpha \mand qy_i+\alpha, \qquad i =1, \ldots, \nu,
$$
are algebraic integers of $\K$ of height at most $O(\log h)$.

Therefore,
 we conclude that for a sufficiently large $h$ the
 equation~\eqref{eq:ConcentrationProdAlgebraic}
 has at most
\begin{equation}
\label{eq:Good Sols} \exp\(C(\nu)\frac{\log h}{\log\log h}\) \le
\exp\(0.5 c(\nu)\frac{\log h}{\log\log h}\)
\end{equation}
 solutions, where $C(\nu)$ is the implied constant of  Lemma~\ref{lem:Div ANF}
and we also assume that $c(\nu) > 2C(\nu)$. This implies the
bound~\eqref{eq:nu>2}  and completes the proof.
\end{proof}

For a set $\cA \subseteq \F_p$ we denote
$$
\cA^{(\nu)} = \{a_1 \ldots a_\nu~:~a_1, \ldots, a_\nu \in \cA\}.
$$

\begin{cor}
\label{cor:ProdSet Fp}  Let $\nu \ge 3$ be a fixed integer and let
$$
e_\nu=\max\{\nu^2-2\nu-2,\nu^2-3\nu+4\}.
$$
Assume that for some
sufficiently large positive integer $h$ and prime $p$ we have
$$
h<p^{1/e_\nu}.
$$
For  $s \in \F_p$ we consider the set
$$
\cA = \left\{ x+s ~:~ 1\le x\le  h \right\} \subseteq \F_p.
$$
Then
$$
\# \cA^{(\nu)} > \exp\(-c(\nu) \frac{\log h  }{ \log \log h }\) h
^{\nu},
$$
where  $c(\nu)$ depends only on $\nu$.
\end{cor}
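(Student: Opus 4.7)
The plan is a standard Cauchy–Schwarz argument comparing the first and second moment of the representation function for products, in which Theorem~\ref{thm:GProdSet} supplies the second moment bound.

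For $\lambda\in\F_p$ let $r(\lambda)$ denote the number of tuples $(x_1,\ldots,x_\nu)\in[1,h]^\nu$ with $(x_1+s)\cdots(x_\nu+s)\equiv\lambda\pmod p$. Then $r(\lambda)$ is supported on $\cA^{(\nu)}$ and
\[
\sum_{\lambda\in\F_p^*}r(\lambda)\ \ge\ h^\nu-\nu h^{\nu-1}\ \gg\ h^\nu
\]
for $h$ sufficiently large, because at most $\nu h^{\nu-1}$ tuples have some $x_i+s\equiv 0\pmod p$. Meanwhile, by the very definition of $K_\nu(p,h,s)$,
\[
\sum_{\lambda\in\F_p^*}r(\lambda)^2\ =\ K_\nu(p,h,s).
\]

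Next I would apply the Cauchy–Schwarz inequality in the form
\[
\Bigl(\sum_{\lambda\in\F_p^*}r(\lambda)\Bigr)^{\!2}\ \le\ \#\{\lambda\in\F_p^*:r(\lambda)>0\}\cdot\sum_{\lambda\in\F_p^*}r(\lambda)^2,
\]
and note that $\#\{\lambda\in\F_p^*:r(\lambda)>0\}\le\#\cA^{(\nu)}$. Combining the two displays above gives
\[
\#\cA^{(\nu)}\ \gg\ \frac{h^{2\nu}}{K_\nu(p,h,s)}.
\]

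Finally, I would invoke Theorem~\ref{thm:GProdSet}: since the hypothesis $h<p^{1/e_\nu}$ forces $h^\nu/p^{\nu/e_\nu}<1$, the parenthetical factor in that theorem is $O(1)$, so
\[
K_\nu(p,h,s)\ \ll\ h^\nu\exp\!\Bigl(c(\nu)\tfrac{\log h}{\log\log h}\Bigr).
\]
Substituting this into the Cauchy–Schwarz lower bound and absorbing the constant into a slightly larger $c(\nu)$ yields the claimed
\[
\#\cA^{(\nu)}\ >\ \exp\!\Bigl(-c(\nu)\tfrac{\log h}{\log\log h}\Bigr)h^\nu.
\]
There is no genuine obstacle here; the only minor point requiring care is the bookkeeping for the possibility $-s\in[1,h]$, which I handle by restricting the sums to $\lambda\neq 0$ and absorbing the $\nu h^{\nu-1}$ loss into the main term.
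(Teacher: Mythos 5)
Your proof is correct, and it is the standard Cauchy--Schwarz deduction from the second-moment bound of Theorem~\ref{thm:GProdSet}; the paper leaves the corollary without an explicit proof precisely because this argument is immediate, so you have reconstructed the intended reasoning.
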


\section{Points on Exponential Curves}

This result improves the bound  of~\cite[Corollary~3]{CillGar}.

\begin{theorem}
\label{thm:ExpCong} Let $g$ be of multiplicative order $t$ modulo
$p$ and let $\gcd(a,p)=1$. Let  $I_1$ and $I_2$ be two intervals
consisting on $h_1$ and $h_2$ consecutive integers respectively, where $h_2\le t$.
Then the number
$R_{a,g,p}(I_1,I_2)$ of solutions of the congruence
$$
x\equiv ag^z \pmod p, \quad (x,z)\in I_1\times I_2
$$
is bounded by
$$
R_{a,g,p}(I_1,I_2) < \(h_1p^{-2/5} + 1\)h_2^{1/2+o(1)}.
$$
\end{theorem}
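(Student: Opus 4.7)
The plan is to estimate the multiplicative energy of the set of solutions in $I_1$ via Lemma~\ref{lem:GProdSet nu=2}, using the hypothesis $h_2\le t$ to transfer the count into an additive-energy estimate in $I_2$. Partition $I_1$ into $\lceil h_1/H\rceil\le h_1 p^{-2/5}+1$ subintervals of length $H=\lfloor p^{2/5}\rfloor$, and for each such subinterval $J$ set $\cX_J=\{x\in J:\ x\equiv ag^z\pmod p\text{ for some }z\in I_2\}$, $R_J=\#\cX_J$. Because $h_2\le t$, the map $x\mapsto z(x)$, where $z(x)\in I_2$ is the unique exponent with $ag^{z(x)}\equiv x\pmod p$, is a bijection between $\cX_J$ and a subset $Z_J\subseteq I_2$ of cardinality $R_J$, and $R_{a,g,p}(I_1,I_2)=\sum_J R_J$.

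The crucial step is a two-sided estimate of the count $L_J$ of quadruples $(x_1,x_2,y_1,y_2)\in\cX_J^4$ satisfying $x_1x_2\equiv y_1y_2\pmod p$. Translating $\cX_J$ into $[1,H]$ and invoking Lemma~\ref{lem:GProdSet nu=2} in the extended form from the remark after it yields $L_J\le R_J^2\exp(C\log H/\log\log H)=R_J^2 p^{o(1)}$, unless its hypothesis $H^3/R_J\ll p$ fails, in which case $R_J\le H^3/p=p^{1/5}$ directly. Conversely, the substitution $x_i\equiv ag^{z(x_i)}\pmod p$ converts the multiplicative congruence into $z(x_1)+z(x_2)\equiv z(y_1)+z(y_2)\pmod t$, and since $|z_1+z_2-z_3-z_4|\le 2h_2\le 2t$ only $O(1)$ integer multiples of $t$ are attainable. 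Restricting to the multiple $0$ and applying Cauchy--Schwarz to the additive energy of $Z_J\subseteq I_2$ gives
\[
L_J\ge E(Z_J)\ge \frac{R_J^4}{2h_2}.
\]
Combining the two bounds produces $R_J\ll\max\bigl(h_2^{1/2}p^{o(1)},\,p^{1/5}\bigr)$.

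Summing over the $\le h_1 p^{-2/5}+1$ subintervals yields $R_{a,g,p}(I_1,I_2)\ll(h_1 p^{-2/5}+1)\max(h_2^{1/2}p^{o(1)},p^{1/5})$. In the regime $h_2\ge p^{2/5}$ the $h_2^{1/2}p^{o(1)}$ term dominates and the claim follows at once; in the complementary regime $h_2<p^{2/5}$ the trivial bound $R_{a,g,p}(I_1,I_2)\le h_2$ combined with the factor $(h_1 p^{-2/5}+1)\ge 1$ on the right-hand side of the claim is enough to close the estimate. The main obstacle I anticipate is the careful bookkeeping in the passage from the multiplicative congruence to its additive counterpart modulo $t$, where the hypothesis $h_2\le t$ is essential to bounding the number of admissible multiples of $t$ by $O(1)$, together with the fusion of the two branches of Lemma~\ref{lem:GProdSet nu=2} to match the exact shape $(h_1p^{-2/5}+1)h_2^{1/2+o(1)}$ claimed in the theorem.
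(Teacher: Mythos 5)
Your approach is essentially the paper's: reduce $I_1$ to subintervals of length $\approx p^{2/5}$ (the paper does this by pigeonhole, selecting one good subinterval $I_{11}$; you partition and sum over all of them — equivalent bookkeeping), bound the multiplicative energy of $\cX_J$ from above via Lemma~\ref{lem:GProdSet nu=2}, and from below by observing that the products $x_1x_2$ with $x_1,x_2\in\cX_J$ lie in at most $2h_2$ residue classes. The paper phrases the lower bound via the Cauchy inequality applied to the representation function $T(\lambda)$; you phrase it through the additive energy of the exponent set $Z_J\subseteq I_2$, correctly exploiting $h_2\le t$ so that $z\mapsto ag^z$ is injective on $I_2$. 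These are the same estimate.

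The step that does not close is your handling of the regime $h_2<p^{2/5}$. The trivial bound $R_{a,g,p}(I_1,I_2)\le h_2$ together with the factor $(h_1p^{-2/5}+1)\ge1$ does \emph{not} yield the target $(h_1p^{-2/5}+1)h_2^{1/2+o(1)}$: the right-hand side involves $h_2^{1/2+o(1)}$, which is smaller than $h_2$ once $h_2$ is at all large, and the factor $(h_1p^{-2/5}+1)$ is of no help unless $h_1p^{-2/5}\gtrsim h_2^{1/2}$. The paper instead arranges matters so that the hypothesis of Lemma~\ref{lem:GProdSet nu=2} is \emph{never} violated when $\#\cX>h_2^{1/2+\delta}$: it bounds
$$
\frac{|I_{11}|^3}{\# \cX}<\frac{\min\{h_2^3,p^{6/5}\}}{h_2^{1/2+\delta}}=\min\{h_2^{5/2-\delta},\,p^{6/5}h_2^{-1/2-\delta}\}=o(p),
$$
and the first branch $h_2^{5/2-\delta}=o(p)$ is precisely what disposes of your troublesome regime $h_2<p^{2/5}$. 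This relies on the additional inequality $|I_{11}|\le h_2$ (valid whenever $\min\{h_1,p^{2/5}\}\le h_2$, in particular in the main application $h_1=h_2$). If you incorporate that refinement — i.e., take the subinterval length to be $\min\{h_1,\lfloor p^{2/5}\rfloor\}$ and use $\min\{h_2^3,p^{6/5}\}$ in the hypothesis check rather than a blanket fallback $R_J\le H^3/p$ — your two branches merge and the dichotomy $R_J\le h_2^{1/2+o(1)}$ holds uniformly, which closes the proof.
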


\begin{proof} By the pigeonhole principle, there exists an interval $I_{11}\subseteq I_1$ of length
$$
|I_{11}|=\min \{h_1,\, \fl{p^{2/5}}\}
$$
such that
$$
R_{a,g,p}(I_1,I_2) \le \left(\frac{2h_1}{p^{2/5}}+1\right)
R_{a,g,p}(I_{11},I_2),
$$
where $R_{a,g,p}(I_{11},I_2)$ is the number of solutions of the
congruence
\begin{equation}
\label{eq:R(I_11)} x\equiv ag^z \pmod p, \quad (x,z)\in
I_{11}\times I_2.
\end{equation}
It is enough to prove that for any fixed $\varepsilon > 0$ we have
$$
R_{a,g,p}(I_{11},I_2) < h_2^{1/2+\varepsilon}.
$$

Let $\cX\subseteq I_{11}$ be the set of $x$ for which the
congruence~\eqref{eq:R(I_11)}  is satisfied for some $z\in I_2$.
Let
$$
T(\lambda) = \#\{\lambda \in \F_p^*~:~ \lambda \equiv x_1x_2 \pmod p
\ \text{for some}\ x_1,x_2 \in \cX\}.
$$
Then obviously,
$$
\#\{\lambda ~:~ T(\lambda) > 0\} \le 2h_2.
$$
Hence, using the Cauchy inequality, we obtain
\begin{equation}
\label{eq:LB}
\begin{split}
\#\{x_1x_2 \equiv &y_1y_2\pmod p~:~ x_1,x_2, y_1,y_2 \in \cX\} \\
& =\sum_{\lambda \in \F_p^*}T(\lambda)^2 \ge (2h_2)^{-1}
\(\sum_{\lambda \in \F_p^*}T(\lambda)\)^2 =  \frac{(\#\cX)^4}{2h_2}.
\end{split}
\end{equation}

Assume that $\#\cX > h_2^{1/2+\delta}$ for some $\delta>0$
(otherwise there is nothing to prove). In this case
$$
\frac{|I_{11}|^3}{\# \cX}< \frac{\min\{h_2^3,
p^{6/5}\}}{h_2^{1/2+\delta}} =\min\{h_2^{5/2-\delta},
p^{6/5}h_2^{-1/2-\delta}\} = o(p).
$$
So Lemma~\ref{lem:GProdSet nu=2} applies and implies that
\begin{equation}
\label{eq:UB} \#\{x_1x_2 \equiv y_1y_2\pmod p~:~ x_1,x_2, y_1,y_2
\in \cX\} \ll (\#\cX)^2 h_2^{o(1)}.
\end{equation}
Clearly~\eqref{eq:LB} and~\eqref{eq:UB} contradict the assumption
$\#\cX > h_2^{1/2+\delta}$ and the result follows.
\end{proof}

In particular, Theorem~\ref{thm:ExpCong} extends the range $h \le
p^{1/3}$ given in~\cite{CillGar} up to $h \le p^{2/5}$ under which
the bound $R_{a,g,p}(I_1,I_2) = h^{1/2+o(1)}$ holds for
$h=h_1=h_2$.

\section{Double Character Sums Estimates}

We first point out the following improvement of the result from
Friedlander and Iwaniec~\cite[Theorem 3]{FrIw2}.

\begin{theorem}
\label{thm:9/20} Let $AB<p$, $B\le A$, $
\cA\subseteq [M,M+A]$ and $\cB\subseteq [N,N+B]$.
For any integer $r\ge 1$, for the sum
$$
S_\chi(\cA,\cB)=\sum_{a\in \cA}\sum_{b\in \cB}\chi(a+b)
$$
with a non-principal multiplicative character modulo a prime $p$,
we have
\begin{equation*}
\begin{split}
S_\chi(\cA,\cB)  \ll A^{1/2}(\#\cA)^{1/2}\#\cB\(\frac{A+Bp^{1/2r}}{A^2\#\cB}\)^{1/4r}&p^{1/8r+o(1)}\\
+(\#\cA)^{1/2}\(\#\cB\)^{1/2}&\(A+p^{1/2r}B\)^{1/2},
\end{split}
\end{equation*}
where the implied constant may depend only on $r$.
\end{theorem}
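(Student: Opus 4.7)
The plan is to follow a Burgess-type argument combined with a multiplicative shift by small primes, with Lemma~\ref{lem:prop1} supplying the crucial multiplicative energy estimate that yields the improvement over~\cite[Theorem~3]{FrIw2}.

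First, I would apply Cauchy--Schwarz in the variable $a$ to obtain
$$
|S_\chi(\cA,\cB)|^2 \le \#\cA \sum_{a \in [M,M+A]} \Bigl|\sum_{b \in \cB} \chi(a+b)\Bigr|^2,
$$
where by positivity the $a$-sum has been extended to run over the full interval $[M, M+A]$. Next, for a parameter $Z$ to be optimized at the end, introduce the set of primes $\cP \subset (Z/2, Z]$ and use the identity $\chi(a+b) = \bar\chi(z)\chi(za+zb)$, followed by averaging over $z \in \cP$. This Burgess-type shift effectively lengthens the range of the outer variable $c = z(a+b) \bmod p$, at the cost of introducing a product set whose multiplicative structure must be controlled by Lemma~\ref{lem:prop1}.

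Then I would apply H\"older's inequality with exponent $2r$ and expand the resulting $2r$-th moment. The expression splits into a \emph{diagonal} term, where the rational function $\prod_i (Z+b_i)/\prod_i (Z+b_i')$ is identically $1$ (forcing the tuples $(b_1,\ldots,b_r)$ and $(b_1',\ldots,b_r')$ to coincide up to permutation), and an off-diagonal contribution handled by completion and the Weil bound, yielding $rp^{1/2+o(1)}$ per non-trivial tuple. The diagonal term is exactly where Lemma~\ref{lem:prop1} enters: it replaces the crude count of coincidences by the sharper bound $(\#\cY)^2 + A \#\cY p^{o(1)}$, which is what produces the $\#\cB$ factor (rather than $B$) in the final estimate.

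Combining the diagonal and off-diagonal contributions and optimizing the parameter $Z$ to balance the two terms yields the claimed bound: the first term arises from the diagonal estimate shaped by Lemma~\ref{lem:prop1}, while the second corresponds to the Weil main term in the small-$Z$ regime, where $(A + p^{1/2r} B)^{1/2}$ reflects the interval length $A$ against the Weil error $p^{1/2r}$ distributed across $2r$ variables of the moment. The main obstacle is the careful bookkeeping of parameters to obtain the exponents $p^{1/8r}$ and $1/4r$ correctly, and in particular the reduction of the higher-order multiplicative energies arising in the $2r$-th moment to the quadratic-energy setting of Lemma~\ref{lem:prop1}, which typically requires additional Cauchy--Schwarz or iterative steps before the lemma becomes directly applicable.
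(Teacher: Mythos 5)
Your proposal correctly identifies the two high-level ingredients: Cauchy--Schwarz in the $a$-variable, and Lemma~\ref{lem:prop1} as the source of the improvement over Friedlander--Iwaniec. However, the structure you describe does not match the argument, and the places where it deviates are exactly the places you flag as ``the main obstacle.'' First, the multiplicative shift $\chi(a+b)=\bar\chi(z)\chi(za+zb)$ averaged over primes $z\in(Z/2,Z]$ plays no role here; that device is used in the proof of Theorem~\ref{thm:1/3} (character sums with the divisor function), not in the Friedlander--Iwaniec scheme that this theorem follows. Introducing it would replace the clean interval structure on the outer variable by a product set $z(a+b)$, which is precisely what you do not want, since the subsequent steps exploit that the outer variable ranges over an interval.

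Second, and more importantly, Lemma~\ref{lem:prop1} does not control the diagonal of a $2r$-th moment. In the actual argument, after Cauchy--Schwarz and expanding $\sum_a |\sum_b\chi(a+b)|^2$, one absorbs $b_2$ into $a$ and introduces the counting function $\nu(u)$ for the congruence $a(b_1-b_2)^{-1}\equiv u\pmod p$ with $a$ in an interval of length $\asymp A$ and $b_1,b_2\in\cB$. The resulting weighted character sum $\sum_u\nu(u)\,W(u)$ is then attacked by H{\"o}lder, producing a factor $\bigl(\sum_u\nu(u)^2\bigr)^{1/(2r)}$; the Weil bound handles $\sum_u|W(u)|^{2r}$. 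The sum $\sum_u\nu(u)^2$ counts solutions of $a_1(b_1-b_2)\equiv a_2(b_3-b_4)\pmod p$, and this is where Lemma~\ref{lem:prop1} applies \emph{directly}: fixing $b_2,b_4$ and reading $x_i=a_i$, $y_i\in(\cB-b_2)\cup(\cB-b_4)$, one gets $\sum_u\nu(u)^2\ll A(\#\cB)^3p^{o(1)}$, replacing the $AB(\#\cB)^2p^{o(1)}$ of Friedlander--Iwaniec. There is no need for ``additional Cauchy--Schwarz or iterative steps'' to reduce higher-order energies to the quadratic case; that perceived gap is a symptom of having placed the lemma in the wrong part of the argument. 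Once $\nu(u)$ and its $L^2$-norm are set up as above, the remaining bookkeeping is exactly that of~\cite[Page~371]{FrIw2} and produces the stated exponents without further ideas.
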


\begin{proof}
We follow the argument on~\cite[Page~371]{FrIw2} and denote by $\nu(u)$ the
number of solutions to the congruence
$$
a(b_1-b_2)^{-1} \equiv u \pmod p
$$
in integers $a$, $b_1$, $b_2$ with $|a - M-N|< 2A$ and $b_1, b_2 \in \cB$,
$b_1 \not \equiv b_2 \pmod p$.

Lemma~\ref{lem:prop1} yields
the following improvement of~\cite[Bounds~(10)]{FrIw2}:
\begin{equation}
\label{eqn:NewBound}
\sum_{u=0}^{p-1}\nu(u)^2\ll A \(\#\cB\)^3p^{o(1)}
\end{equation}
(instead of  $A B(\# \cB)^2p^{o(1)}$ on the right hand side, as in~\cite{FrIw2}).
Indeed,  the sum in~\eqref{eqn:NewBound} is equal to the number of solutions of the congruence
$$
a_1(b_1-b_2)\equiv a_2(b_3-b_4) \pmod p,
$$
where $a_1,a_2$ belong to an interval $I$ of length $4A$, $b_1,b_2,b_3,b_4 \in \cB$ and
$b_1\not=b_2, b_3\not =b_4$. Furthermore $\cB$ belongs to an interval of length $B$.
We fix $b_2, b_4 \in \cB$ and thus get that the number of solutions is less than
$(\#\cB)^2$ times the number of solutions of
\begin{equation}
\label{eqn:NewCong}
a_1\widetilde b_1\equiv a_2 \widetilde b_2 \pmod p,
\end{equation}
where $a_1,a_2 \in I$  and  $\widetilde b_1, \widetilde b_2 \ne 0$ belong to the union of two
``shifted'' sets
$\cB-{b_2}$ and $\cB-{b_4}$, respectively.
Applying  Lemma~\ref{lem:prop1}
we derive   the bound
$$
((\#\cB)^2 +A\#\cB)p^{o(1)}<A\#\cB p^{o(1)}
$$
(since $B<A$) on the number of solutions to~\eqref{eqn:NewCong},
which in turn yields~\eqref{eqn:NewBound}.

This yields instead of~\cite[Bound~(11)]{FrIw2}, the bound
\begin{equation*}
\begin{split}
S_\chi(\cA,\cB) \ll (\#\cA)^{1/2}  \(\# \cB\)^{1-1/4r}A^{1/4-1/4r} (A+ p^{1/2r}&B)^{1/4}p^{1/8r+o(1)}\\
& +(A\#\cA\#\cB)^{1/2}.
\end{split}
\end{equation*}
Concluding the argument as in~\cite{FrIw2} we obtain the result.
\end{proof}

Taking  $r=2$, we derive:

\begin{cor}
\label{cor:Improve9/20} Under the conditions of Theorem~\ref{thm:9/20} if
$\cA=\cB$, $A\le p^{1/2}$ and $\#\cA>p^{9/20+\varepsilon}$ for some $\varepsilon>0$, then we have
$$
S_\chi(\cA,\cA) \ll(\#\cA)^2p^{-\delta},
$$
where  $\delta>0$ depends only on $\varepsilon$.
\end{cor}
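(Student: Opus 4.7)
The plan is to obtain the corollary as a direct specialization of Theorem~\ref{thm:9/20} with $r=2$ and $\cA=\cB$ (so that we may take $A=B$, both sets sitting inside the same interval of length $A$), and then to verify that the hypothesis $\#\cA>p^{9/20+\varepsilon}$ dominates both terms of the resulting bound.

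Substituting $r=2$, $B=A$, $\#\cB=\#\cA$ into Theorem~\ref{thm:9/20}, and using $A\le p^{1/2}$ so that $A+p^{1/4}A\ll Ap^{1/4}$, the first term simplifies to
$$
A^{1/2}(\#\cA)^{3/2}\left(\frac{Ap^{1/4}}{A^2\#\cA}\right)^{1/8}p^{1/16+o(1)}
= A^{3/8}(\#\cA)^{11/8}p^{3/32+o(1)},
$$
and the second term simplifies to $\#\cA\cdot(Ap^{1/4})^{1/2}\ll \#\cA\cdot p^{3/8}$, again using $A\le p^{1/2}$.

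Next I check that each of these is $\ll(\#\cA)^2 p^{-\delta}$ for a suitable $\delta=\delta(\varepsilon)>0$. For the first term, after dividing by $(\#\cA)^2$, the requirement becomes $A^{3/8}p^{3/32+o(1)}\ll(\#\cA)^{5/8}p^{-\delta}$; the worst case is $A=p^{1/2}$, which gives the threshold $\#\cA\gg p^{9/20+8\delta/5+o(1)}$ (note that $(9/32)\cdot(8/5)=9/20$, which is precisely where the exponent $9/20$ in the hypothesis comes from). Thus, provided we choose $\delta<5\varepsilon/8$, the assumption $\#\cA>p^{9/20+\varepsilon}$ forces the first term to be $\ll(\#\cA)^2 p^{-\delta}$. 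For the second term, we need $\#\cA\gg p^{3/8+\delta}$, which is strictly weaker than the first condition since $9/20>3/8$.

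There is no real obstacle here; everything is routine once Theorem~\ref{thm:9/20} is in hand. The only substantive observation is that $A\le p^{1/2}$ is exactly sharp enough for the factor $p^{1/2r}=p^{1/4}$ inside the bound to combine cleanly with $A$ and yield the $9/20$ exponent; weakening the hypothesis on $A$ would require reoptimizing $r$, and strengthening it (say to $A\le p^{1/2-c}$) would lower the exponent in the hypothesis on $\#\cA$.
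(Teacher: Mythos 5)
Your proof is correct and follows exactly the route the paper intends: the paper gives no separate argument for the corollary, only the remark ``Taking $r=2$, we derive:''. Your computation correctly specializes Theorem~\ref{thm:9/20} at $r=2$ with $B=A$ and $\#\cB=\#\cA$, uses $A\le p^{1/2}$ to bound $A+p^{1/4}B\ll Ap^{1/4}$, and verifies that the threshold $\#\cA\gg p^{9/20+8\delta/5+o(1)}$ coming from the first term is precisely the source of the exponent $9/20$.
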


We recall that it has been noted in~\cite[Remark]{FrIw} that the bound~\eqref{eqn:NewBound}
and Corollary~\ref{cor:Improve9/20} hold under some additional
conditions. So here we recover the same estimates without that restriction.

\section{Character Sums with the Divisor Function}

Next, we consider the sum
\begin{equation}
\label{sumdivfunc}
S_a(N) = \sum_{1\le n\le N}\tau(n)\chi(a+n),
\end{equation}
where $a\in\Z$, $\tau$ is the divisor function
and $\chi$ is a non-principal multiplicative character modulo a prime $p$.

Karatsuba~\cite{Kar1} has established a non-trivial estimate for~\eqref{sumdivfunc}
uniformly over the integers $a$ with  $\gcd(a,p)=1$
provided that
$N\ge p^{1/2+\eps}$ with some fixed $\eps>0$.
Chang~\cite{Chang1} has extended this result to $N\ge p^{\rho+\eps}$ where
$$
\rho=\frac18(7-\sqrt{17})=0.359\ldots.
$$
Furthermore, Karatsuba~\cite{Kar4} has also shown that if $0<|a|\le p^{1/2}$
then the sums~\eqref{sumdivfunc} can be nontrivially estimated
already for  $N\ge p^{1/3+\eps}$. Here we show that  one has a
nontrivial estimate of $S_a(N)$ for $N\ge p^{1/3+\eps}$ and any integer
$a$ with $\gcd(a,p)=1$.

\begin{theorem}
\label{thm:1/3} For any $\eps>0$ there is $\delta>0$ such that
if $N\ge p^{1/3+\eps}$  then uniformly over the integers $a$ with
$\gcd(a,p)=1$,
$$
S_a(N)\ll Np^{-\delta}.
$$
\end{theorem}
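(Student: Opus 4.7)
The plan is to write the divisor function as a sum over pairs, apply Dirichlet's hyperbola identity, dyadically decompose, and then bound the resulting bilinear character sums by Burgess's bound when one variable is sufficiently long and by a fourth-moment argument using Lemma~\ref{lem:prop2} when both variables are balanced near $\sqrt N$.

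Starting from $\tau(n)=\sum_{d\mid n}1$ and the hyperbola identity, I would write
\[
S_a(N)=2\sum_{d<\sqrt N}\sum_{d<m\le N/d}\chi(a+dm)+O(\sqrt N),
\]
then by a smooth dyadic decomposition reduce to obtaining power-saving bounds of the form
\[
W(D,M)=\sum_{d\sim D}\sum_{m\sim M}\chi(a+dm)\ll DM\,p^{-\delta'}
\]
for dyadic pairs $D\le M$ with $DM\le N$. Since $N\ge p^{1/3+\eps}$ we have $M\ge\sqrt N\ge p^{1/6+\eps/2}$. In the regime where $M\ge p^{1/4+\eta}$ for some small $\eta=\eta(\eps)>0$, the inner sum $\sum_{m\sim M}\chi(a+dm)$ is long enough for the Burgess bound to deliver the required power saving uniformly in $d$.

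The essential new difficulty is the complementary balanced regime $p^{1/6+\eps/2}\le M<p^{1/4+\eta}$, in which both $D$ and $M$ lie close to $p^{1/6}$. Here I would use a prime-insertion trick: fix $Z=p^{\eps_0}$ for a small $\eps_0=\eps_0(\eps)>0$, let $\cP$ denote the primes in $(Z/2,Z]$, and use the identity
\[
\sum_{z\in\cP}\sum_{m'\sim M/z}\chi(a+dm'z)=\sum_{m\sim M}\omega_{\cP}(m)\chi(a+dm),
\]
where $\omega_{\cP}(m)$ counts prime divisors of $m$ in $\cP$. Comparing $\omega_{\cP}(m)$ to its mean $\overline\omega\asymp 1/\log Z$ via a Tur\'an--Kubilius second-moment estimate recasts $W(D,M)$, up to acceptable error, as $\overline\omega^{-1}$ times the triple sum on the left. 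Applying Cauchy--Schwarz over the $(d,m')$ variables and expanding the resulting square then reduces matters to counting solutions of a congruence of the shape $x_2z_2(s+x_1y_1)\equiv x_1z_1(s+x_2y_2)\pmod p$, which is precisely what Lemma~\ref{lem:prop2} addresses with $(s,X,Y,Z)=(a,D,M/Z,Z)$; the hypothesis $X^2YZ=D^2M<p$ is comfortable throughout the range. The off-diagonal contributions, exponential sums along nontrivial rational functions of a single remaining variable, are controlled by Weil's bound.

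The main obstacle will be the careful choice of the parameter $Z$ (equivalently $\eps_0$): $Z$ must be small enough that the $\log Z$ loss from prime insertion and the accompanying Cauchy--Schwarz expansion are harmless, yet large enough that the Lemma~\ref{lem:prop2} diagonal term and the Weil off-diagonal term together yield a net power saving in $p$. Threading this balance is exactly what produces the threshold $N\ge p^{1/3+\eps}$ of the theorem.
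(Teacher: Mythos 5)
Your overall skeleton — hyperbola identity, dyadic decomposition, a prime-insertion trick to create a short character sum, and Lemma~\ref{lem:prop2} to control a multiplicative-type congruence — is in the right spirit, and your direct Burgess step for dyadic blocks with $M\ge p^{1/4+\eta}$ would work. However, the central step of your balanced regime fails. You insert primes \emph{multiplicatively}, writing $m=m'z$ and invoking $\omega_{\cP}(m)\approx\overline\omega\asymp 1/\log Z$ via Tur\'an--Kubilius. The error term you must then control is $\overline\omega^{-1}\sum_d\sum_m(\omega_{\cP}(m)-\overline\omega)\chi(a+dm)$; by Cauchy--Schwarz in $m$ and Tur\'an--Kubilius $(\sum_m(\omega_{\cP}(m)-\overline\omega)^2\ll M\overline\omega)$ this is $\ll \overline\omega^{-1}D\sqrt{M}\cdot\sqrt{M\overline\omega}=DM\sqrt{\log Z}$, which is \emph{larger} than the trivial bound $DM$ for $W(D,M)$. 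Since $\omega_{\cP}(m)=0$ for all but a $1/\log Z$ fraction of $m$, the variance-to-mean-squared ratio is $\asymp\log Z$, so the normalization cannot ``recast $W(D,M)$ up to acceptable error'' — the approach does not yield any power (or even logarithmic) saving. A second, related gap: the congruence $x_2z_2(s+x_1y_1)\equiv x_1z_1(s+x_2y_2)$ in Lemma~\ref{lem:prop2} does not arise from a multiplicative substitution $m=m'z$ followed by Cauchy--Schwarz on $(d,m')$; expanding that square gives a different congruence that the lemma does not address.

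The paper instead inserts primes \emph{additively}: for each dyadic block in $x$ it takes $Z=Np^{-2\eta}2^{-j}$, $T=\lfloor p^{\eta}\rfloor$, averages over the shifts $y\mapsto y+zt$ with $z\in\cP$, $t\le T$, and absorbs the boundary error $O(Np^{-\eta})$. Then $\chi(a+x(y+zt))=\chi(xz)\chi(u+t)$ with $u=ax^{-1}z^{-1}+yz^{-1}$, and Lemma~\ref{lem:prop2} (precisely in this parametrization, with $s=a$) shows the map $(x,y,z)\mapsto u$ has multiplicity $p^{o(1)}$, so the collection $\cU$ of values $u$ on which the $T$-length sum is large has size $\gg p^{1/2+\eps/2}$. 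The finish is \emph{not} Weil on rational function sums — those would be evaluated over too-short intervals to complete profitably — but a result of Karatsuba on $L^1$-means of short character sums $\sum_{t\le T}\chi(u+t)$ over a subset $\cU\subseteq\F_p$ of size $>p^{1/2+\eps/2}$, which is a Burgess-type input. To make your argument go through, replace the Tur\'an--Kubilius multiplicative normalization with the paper's additive shifting (whose error is genuinely $O(Np^{-\eta})$), and replace the final Weil step with Karatsuba's mean-value estimate.
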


\begin{proof}
We can assume that
$N<p^{1/2 + 0.1\eps}$ since otherwise the
result follows from the aforementioned result of
Karatsuba~\cite{Kar1}.
Let $X_0=\sqrt N$. We have
\begin{equation}
\begin{split}
\label{eq:estforS}
S_a(N)& =\sum_{1\le x\le X_0}\chi(a+x^2)+\sum_{1\le x\le X_0}
2\sum_{x<y\le N/x}\chi(a+xy)\\
&=2\sigma+O(\sqrt N),
\end{split}
\end{equation}
where
$$
\sigma=\sum_{1\le x< X_0}\sum_{x<y\le N/x}\chi(a+xy).
$$
Now we split the sum $W$ into $L=\fl{(\log X_0)/\log2}$ sums
$$
\sigma_j=\sum_{2^{j-1}\le x<\min(X_0,2^j)}\sum_{x<y\le N/x}\chi(a+xy),\quad j=1,\ldots,L.
$$

 We specify $j$ so
that for $\widetilde \sigma = \sigma_j$ we have
\begin{equation}
\label{eq:estforS1}
|\sigma| \ll  |\widetilde \sigma | L \ll  |\widetilde \sigma | \log p
\end{equation}
and define
$$I=\{x:\,2^{j-1}\le x<\min(X_0,2^j)\}.
$$
Furthermore, let
$$
\eta=\eps/4, \qquad
Z=Np^{-2\eta}2^{-j}, \qquad T=\fl{p^{\eta}}
$$
and let $\cP$ be the set of the primes $z\in(Z/2,Z]$.
Following~\cite{Kar4}, we observe that
\begin{equation}
\label{eq:estforS(j)}
\widetilde \sigma= \Sigma+O(Np^{-\eta}),
\end{equation}
where
$$ \Sigma=\frac1{\#\cP T}\sum_{x\in I}\sum_{x<y\le N/x}
\sum_{z\in \cP}\sum_{t=1}^T\chi(a+x(y+zt)).$$
We now prove that for a sufficiently large $p$ we have
\begin{equation}
\label{eq:estforS2desired}
| \Sigma|\le Np^{-2\delta}
\end{equation}
for some $\delta>0$ that depends only on $\eps$. Then the desired
result  follows from~\eqref{eq:estforS},  \eqref{eq:estforS1} and~\eqref{eq:estforS(j)}.

Defining
$$S(x,y,z)=\sum_{t=1}^T\chi(a+x(y+zt))$$
we have
\begin{equation}
\label{eq:estforS2}
|\Sigma|\le \frac1{\#\cP T}\sum_{x\in I}\sum_{x<y\le N/x}
\sum_{z\in \cP}|S(x,y,z)|.
\end{equation}

Assume that~\eqref{eq:estforS2desired} does not hold and define
$\cE$ as the set of triples $(x,y,z)$ involved in the summation in~\eqref{eq:estforS2}
and such
that $|S(x,y,z)|\ge Tp^{-3\delta}$. Then we have
$$
\# \cE \ge N\#\cP p^{-3\delta}
$$
provided that $p$ is large enough.
Using the multiplicativity of $\chi$ we derive
$$
|S(x,y,z)|=\left|\sum_{t=1}^T\chi(ax^{-1}z^{-1}+yz^{-1}+t)\right|,
$$
where $x^{-1}, z^{-1}$ are considered in $\F_p$, and define
$$\cU=\{ax^{-1}z^{-1}+yz^{-1}~:~(x,y,z)\in \cE\}.$$
Thus, we get
$$\left|\sum_{t=1}^T\chi(u+t)\right|\ge Tp^{-3\delta}$$
for any $u\in \cU$. Hence
\begin{equation}
\label{eq:estforuinU}
\sum_{u\in U}\left|\sum_{t=1}^T\chi(u+t)\right|\ge \#\cU Tp^{-3\delta}.
\end{equation}

Take $X=2^j$, $Y=N2^{1-j}$. Since we have assumed
that $N\le p^{1/2 + 0.1\eps}$, we have $X^2YZ<p$ provided that $p$ is large enough,
so we can use Lemma~\ref{lem:prop2}. Hence, we conclude that the congruence
$$
ax_1^{-1}z_1^{-1}+y_1z_1^{-1}\equiv  ax_2^{-1}z_2^{-1}+y_2z_2^{-1} \pmod p
$$
has at most $N\#\cP p^{o(1)}$ solutions in $(x_1,y_1,z_1), (x_2,y_2,z_2)\in \cE$.
Therefore, recalling that $\eta=\eps/4$ and  assuming that
$\delta < \eps/14$, we obtain
\begin{equation*}
\begin{split}
\#\cU& \ge (N\#\cP p^{-3\delta})^2 (N\#\cP p^{o(1)})^{-1}\gg NZ p^{-7\delta}
= N^{2}X^{-1} p^{-2\eta} p^{-7\delta} \\
&\gg  N^{3/2}  p^{-2\eta} p^{-7\delta}
\ge p^{1/2+3\eps/2 -2\eta -7\delta} =  p^{1/2+\eps -7\delta}
\ge p^{1/2+\eps/2} .
\end{split}
\end{equation*}
Therefore, by a result of Karatsuba
(see, for example,~\cite[p.~52]{Kar5})
we have
$$\sum_{u\in \cU}\left|\sum_{t=1}^T\chi(u+t)\right|\le \#\cU Tp^{-\kappa},
$$
where $\kappa > 0$ depends only on $\eps$.
 Taking $\delta < \min\{\kappa/3,\, \eps/14\}$
we see that~\eqref{eq:estforuinU} is false,
which concludes the proof.
\end{proof}

\begin{rem}  Let $\tau_k(n)$ be the number of ordered 
representations $n = d_1 \ldots d_k$ with positive integers $d_1, \ldots, d_k$.
Our argument can also be used to improve the range of $a$ of the result
of~\cite{Kar4} on analogues of the   sums $S_a(N)$ with $\tau_k$ instead
of $\tau$.
\end{rem}

\section*{Acknowledgement}

The research of  J.~B. was partially supported
by National Science Foundation   Grant DMS-0808042, that of  S.~V.~K. was partially supported
by Russian Fund for Basic Research, Grant N.~11-01-00329
and that of  I.~E.~S. by Australian Research Council Grant  DP1092835.


\begin{thebibliography}{99}

\bibitem{ACZ} A. Ayyad, T. Cochrane and Z. Zheng,
`The congruence $x_1x_2 \equiv x_3x_4 \pmod p$, the equation
$x_1x_2 = x_3x_4$ and the mean value of character sums',
{\it J. Number Theory\/}, {\bf 59} (1996), 398--413.

\bibitem{BHW} U. Betke, M. Henk and J. M. Wills,
`Successive-minima-type inequalities', {\it Discr. Comput. Geom.\/},
{\bf 9} (1993), 165--175.

\bibitem{Bour}
J. Bourgain, `More on the sum-product phenomenon in prime fields and
its applications', {\it Intern. J. Number Theory\/}, {\bf 1} (2005),
1--32.


\bibitem{BGKS} J.~Bourgain, M.~Z.~ Garaev, S. V. Konyagin and
I. E. Shparlinski, `On the hidden shifted power problem', {\it
Preprint\/}, 2011 (available from {\tt
http://arxiv.org/abs/1110.0812}).

\bibitem{ChanShp} T. H. Chan  and I. E. Shparlinski,
`On the concentration of points on modular hyperbolas and
exponential curves', {\it Acta Arith.\/}, {\bf 142} (2010), 59--66.

\bibitem{Chang0} M.-C. Chang, `Factorization in
generalized arithmetic progressions and applications to the Erd{\H
o}s-Szemer{\'e}di sum-product problems', {\it Geom. and Funct. Anal.\/},
{\bf 13} (2003),   720--736.

\bibitem{Chang1} M.-C. Chang, `Character sums in finite fields',
Amer. Math. Soc., Providence, RI, 2010., 83--98.

\bibitem{ChangCillGHShZ} M.-C. Chang, J. Cilleruelo, M. Z. Garaev, J. Hern\'andez, I. E. Shparlinski and A. Zumalac\'arregui,
`Concentration of points and isomorphism classes of hyperelliptic curves over a finite field in some thin families' {\it
Preprint\/}, 2011  (available from {\tt
http://arxiv.org/abs/1111.1543}).

\bibitem{CillGar} J. Cilleruelo and
M. Z. Garaev, `Concentration of points on two and three dimensional
modular hyperbolas and applications', {\it  Geom. and Func.
Anal.\/},  {\bf 21} (2011), 892--904.


\bibitem{CochSih}  T. Cochrane and S. Shi,
`The congruence $x_1x_2 \equiv x_3x_4 \pmod p$
and mean values of character sums',
{\it J. Number Theory\/}, {\bf 130} (2010), 767--785.


\bibitem{CoZh}  T. Cochrane and Z. Zheng,
 `High order moments of character sums',
{\it Proc. Amer. Math. Soc.\/}, {\bf 126} (1998),   951--956.

\bibitem{FrIw}
J.~B.~Friedlander and H.~Iwaniec,
`The divisor problem for arithmetic progressions',
{\it Acta Arith.\/}, {\bf 45} (1985), 273--277.

\bibitem{FrIw2}
J.~B.~Friedlander and H.~Iwaniec,
`Estimates for character sums',
{\it Proc. Amer. Math. Soc.\/}, {\bf 119} (1993), 365--372.

\bibitem{Gar}  M. Z. Garaev, `On multiplicative congruences',
{\it Math. Zeitschrift\/},  (to appear).

\bibitem{GarGar}  M. Z. Garaev and V. Garcia,
`The equation $x_1x_2 = x_3x_4 + \lambda$  in fields of
prime order and applications',
{\it J. Number Theory\/},  {\bf 128} (2008),  2520--2537.

\bibitem{vzGG}
J. von zur Gathen and J. Gerhard, {\it Modern computer algebra\/},
Cambridge University Press, Cambridge, 2003.

\bibitem{Kar1} A. A. Karatsuba, 'A certain arithmetic sum',
{\it Doklady Acad. Sci. USSR (Transl. as Soviet Math. Dokl.)\/},
{\bf 199} (1971), 770--772 (in Russian),




\bibitem{Kar4} A. A. Karatsuba, 'Weighted character sums',
{\it Izv. Ross. Akad. Nauk Ser. Mat. (Transl. as Izv. Math.)\/},
{\bf 64}(2) (2000), 29--42 (in   Russian).


\bibitem{Kar5} A. A. Karatsuba, `Arithmetic problems in the theory of
Dirichlet characters', {\it Uspekhi Mat. Nauk. (Transl. as Russian Math.
Surveys)\/}, {\bf 63}(4) (2008),
43--92 (in  Russian).

\bibitem{Kon}   S. V. Konyagin, `Bounds of character sums in finite fields',
{\it Matem. Zametki\/}, {\bf 88} (2010),
529--542 (in Russian).

\bibitem{LeBd}  P. Le Boudec, `Power-free values of the polynomial
$t_1...t_r - 1$', {\it Bull. Aust. Math. Soc.\/} (to appear).

\bibitem{Mig} M.~Mignotte, {\it Mathematics for
computer algebra\/}, Springer-Verlag, Berlin, 1992.

 \bibitem{Nar}
W. Narkiewicz, {\it Elementary and analytic theory of algebraic
numbers\/}, Polish Sci. Publ., Warszawa, 1990.

\bibitem{Shp1} I. E. Shparlinski, `On the distribution of points on
multidimensional modular hyperbolas', {\it Proc. Japan Acad. Sci.,
Ser.A\/}, {\bf 83} (2007), 5--9.

\bibitem{Shp2} I. E. Shparlinski, `On a generalisation of a Lehmer
problem', {\it Math. Zeitschrift\/}, {\bf 263} (2009), 619--631.


\bibitem{TaoVu}
T. Tao and V. Vu, {\it Additive combinatorics\/}, Cambridge Stud.
Adv. Math., {\bf 105}, Cambridge University Press, Cambridge, 2006.

\end{thebibliography}
\end{document}